\newtheorem{theorem}{Theorem}[section]
\newtheorem{proposition}[theorem]{Proposition}
\theoremstyle{definition}
\newtheorem{example}[theorem]{Example}
\newtheorem{examples}[theorem]{Examples}
\newtheorem{definition}[theorem]{Definition}
\newtheorem{observation}[theorem]{Observation}
\newtheorem{remark}[theorem]{Remark}
\newtheorem{remarks}[theorem]{Remarks}
\newcommand{\half}{\frac{1}{2}}
\newcommand{\TT}{\mathbb{T}}
\newcommand{\TP}{\mathbb{TP}}
\newcommand{\RR}{\mathbb{R}}
\newcommand{\ZZ}{\mathbb{Z}}
\newcommand{\Projn}{\Ran / \RR \One}
\newcommand{\Rn}{\RR^{n}}
\newcommand{\Zn}{\ZZ^{n}}
\newcommand{\Ran}{\RR^{n+1}}
\newcommand{\ta}{\oplus}
\newcommand{\tm}{\odot}
\newcommand{\varx}{{\bf x}}
\newcommand{\vary}{{\bf y}}
\newcommand{\varz}{{\bf z}}
\newcommand{\varp}{{\bf p}}
\newcommand{\varq}{{\bf q}}
\newcommand{\One}{{\bf 1}}
\newcommand{\Zero}{{\bf 0}}
\newcommand{\tnorm}[1]{\parallel #1 \parallel_{\mathrm{tr}} \,}
\newcommand{\Bn}{B_{\mathrm{tr}}^n}
\newcommand{\BRn}{B_{\mathrm{tr},R}^n}
\newcommand{\San}{S_{\mathrm{tr}}^{n-1}}
\newcommand{\In}[1]{I_{#1}^n}
\newcommand{\abs}[1]{\lvert #1 \rvert}
\numberwithin{equation}{section}
\pgfplotsset{compat = newest}
\tikzstyle{vertex}=[circle, draw, inner sep=0pt, minimum size=6pt]
\begin{document}
\title[]{Tropical balls, geodesics and honeycomb}
\author{Amnon Rosenmann}
\email[]{rosenmann@math.tugraz.at}
\date{}

\begin{abstract}
In these notes we describe the geometry of tropical balls in $\mathbb{R}^n$ equipped with the tropical metric. 
After defining the tropical length of rectifiable curves (and not just piecewise linear curves), 
we characterize compact tropically geodesic sets in $\mathbb{R}^n$.
Next, we describe the tropical unit ball as a zonotope, via its tropical generating set, as a union of $n+1$ tropical unit hypercubes, and as the tropical geodesic hull of the tropical unit vectors.
Finally, we show that translates of the tropical unit ball whose centers lie in a sublattice of $\mathbb{Z}^n$ form a facet-to-facet honeycomb tiling of $\mathbb{R}^n$.
We note that a great part of the material presented here is either known or implied from known results.
\end{abstract}
\subjclass[2020]{14T99, 15A80, 28A75}
\keywords{tropical ball, tropical geodesic, tropical honeycomb}
\maketitle
\tableofcontents{}

\section{Introduction}
In \cite{CGQ04} (see also \cite{Pue14}) Cohen, Gaubert and Quadrat introduced a tropical metric that is an additive version of Hilbert projective metric.
This metric is direction-dependent: the tropical distance between two points is invariant to translations but not to rotations.
After recalling the tropical metric, we define the tropical length of a rectifiable curve by way of a standard partition. 
This allows us to define tropical geodesics among rectifiable curves and not only among piecewise linear curves.
Next, we characterize compact tropically geodesic sets in $\Rn$.
They are precisely the compact polytopes that are intersections of half-spaces of the forms
$a_i \le x_i \le a_i'$ and $x_i-x_j \ge b_{ij}$.
This description makes clear that there are only finitely many combinatorial types of compact tropically geodesic sets in each dimension, where the different types depend on which of these inequalities are redundant.

A central object throughout these notes is the $n$-dimensional tropical unit ball $\Bn$. Its definition with respect to the tropical metric results in a polyhedral shape that is stretched along the line $\lambda \One$.
We describe $\Bn$ from several complementary viewpoints. 
First, it is a zonotope: it can be realized as the Minkowski sum of the tropical unit segments in the $n+1$ tropical coordinate directions, which immediately yields a polyhedral description of its faces. 
Secondly, $\Bn$ decomposes into $n+1$ tropical unit hypercubes, disjoint up to common facets, a decomposition that is induced by a natural tropical coordinate system with $n+1$ orthants. 
Thirdly, $\Bn$ is the tropical geodesic hull of the tropical unit vectors.
Finally, it is generated by the set $S$ of the (negations) of these vectors, i.e., it is the tropical convex hull of $S$, as introduced by Develin and Sturmfels \cite{DS04}. 
As an additional geometric notion, we define the tropical angle between two standard lines via the intrinsic metric on the tropical unit sphere.

The last part of the paper is about a constructive honeycomb theorem for tropical balls. 
We show that translates of $\Bn$ with centers at the sublattice
$$
	\{ {\bf c} = (c_1,\ldots,c_n) \in\Zn : \sum_{i=1}^{n} c_i \equiv 0 \: (\mathrm{mod}~n+1)
$$
tile $\Rn$ facet-to-facet. 
While this tiling property is consistent with general results on space-tiling zonotopes (by  McMullen \cite{McM75}), we give an explicit proof that assigns to each point $\varx \in \Rn$ (away from boundaries) a unique ball $\Bn ({\bf c})$ to which $\varx$ belongs.

\medskip
\noindent\textbf{Organization.}
Section~\ref{sec:proj} reviews the tropical projective torus and fixes notation. 
Section~\ref{sec:metric} recalls the tropical metric and its basic properties. 
In Section~\ref{sec:geodesics} we define the tropical length of rectifiable curves, introduce tropical geodesics, and characterize compact tropically geodesic sets.
In Section~\ref{sec:ball} we present several equivalent descriptions of the tropical unit ball, that can serve as definitions.
Finally, Section~\ref{sec:honeycomb_Rn} is about the tiling of $\mathbb{R}^n$ by translates of the tropical unit ball.

\indent
 A great part of the material presented here is either known or implied from known results.

\section{The tropical projective torus}
\label{sec:proj}
Tropical geometry may be seen as a degeneration of complex structures into piecewise linear and polyhedral structures through Maslov's log-limit ``dequantization'' (see \cite{KM97}).
It studies polynomials and varieties over the semifield of extended real numbers with $+\infty$ (or $-\infty$) and the idempotent operation of minimum (or maximum) in place of addition and the operation of addition in place of multiplication.
The result is a kind of linear and combinatorial and, in a way, simpler version of algebraic geometry. Applications are, among others, in algebraic geometry, optimization problems, discrete event dynamical systems, neural networks and string theory. For the main source on tropical geometry we refer to the book of Maclagan and Sturmfels \cite{MS15}; other selected sources are \cite{IMS09}, \cite{Alg13}, \cite{MR}, \cite{PS05}, \cite{Gro15},\cite{Jos21}. 
Here we restrict ourselves to the tropical projective torus equipped with the tropical metric. We start by reviewing the basic notions of tropical algebra.

The (min) {\it tropical semifield} (or {\it min-plus algebra}) ($\TT,\ta, \tm$) consists of the set $\TT = \RR \cup {\infty}$ equipped with the operations of tropical addition $\ta$ and tropical multiplication $\tm$, defined by
\begin{equation}
	a \ta b := \min(a,b) , \qquad a \tm b := a+b.
\end{equation}
The identity element for addition is $\infty$ and the identity element for multiplication is $0$.
The operations of addition and multiplication are associative and commutative, and multiplication is distributive over addition.
Tropical algebra is part of ``idempotent mathematics'', which was developed mainly by Maslov and his collaborators (see \cite{Lit07} for a brief introduction). 
Analogous is the {\it max-plus algebra} (see the monograph of Butkovi{\v{c}} \cite{But10}), in which the maximum operation plays the role of addition and the identity element for addition is $-\infty$.
We choose here to work mainly over the min-plus algebra but also mention in most cases what happens over the isomorphic semifield of max-plus algebra.

The construction of the $n$-dimensional semimodule $\TT^n$ over $\TT$ is standard. Given $\varx = (x_1,\ldots,x_{n}), {\it \vary} = (y_1,\ldots,y_{n}) \in \TT^n$ and $a \in \TT$, vector addition is defined entry-wise by $\varx \ta \vary = (x_1 \ta y_1, \ldots, x_n \ta y_n)$ and multiplication by a scalar is defined by $a \tm \varx =  (a \tm x_1, \ldots, a \tm x_n)$.

Since we are only interested here in bounded objects, we work over $\TT^{\times} = \TT~\setminus~\{\infty\} = \RR$.
We then look at the $n$-dimensional {\it tropical projective torus} $\Projn$ (sometimes denoted $\TP^{n}$),
where $\One = (1,\ldots,1)$, that is, for all $a \in \RR$, $(x_1,\ldots,x_{n+1}) \sim (x_1+a,\ldots,x_{n+1}+a)$, so that the elements of $\Projn$ are the parallel lines in $\Ran$ in direction $\One$.
For each element of $\Projn$, written in homogeneous coordinates as $(x_1 : \ldots : x_{n+1})$, we can choose as a representative the element $(x_1 - x_{n+1} : \ldots : x_n - x_{n+1}:0)$, and then map it bijectively to the element $(x_1 - x_{n+1}, \ldots, x_n - x_{n+1})$ of $\Rn$. 
In what follows, when we use the coordinates $(x_1,\ldots,x_{n})$ we refer to this representation of $(x_1 : \ldots : x_n : 0) \in \Projn$ as an element of $\Rn$.

\section{Tropical metric}
\label{sec:metric}
The {\it (min) tropical line segment} (see \cite{MS15}, p. 230) between the points $\varx=(x_1,\ldots,x_n)$ and $\vary=(y_1,\ldots,y_n)$ of $\Rn$
is a piecewise linear curve. It contains a minimal vertex $\varz=\varx \ta \vary = (\min(x_1,y_1),\ldots,\min(x_n,y_n))$ (where $\varz$ may be $\varx$ or $\vary$) and two positive-oriented piecewise linear curves, one from $\varz$ to $\varx$ and one from $\varz$ to $\vary$, where each line segment is in direction of a vector with zeros and ones. The set of vertices along the tropical line segment from $\varz$ to $\varx$ 
(where some vertices may coincide) is
\begin{equation}
	\{(x_j-z_j) \tm \varz \ta \varx : j = 1, \ldots, n\}.
\end{equation}
The vertices along the path from $\varz$ to $\varx$ may be described in the following way.
We look at the entries of $\varz$ as representing the initial values of $n$ clocks $c_1,\ldots,c_n$. The clocks then start to run simultaneously and at the same rate. At each vertex along the path at least one clock, say $c_i$, stops running after reaching its target value $x_i$. The rest of the clocks continue to run at the same rate until the next vertex, where at least one more clock stops running, and so on until every clock $c_j$ reaches its final value $x_j$.
The vertices along the tropical line segment from $\varz$ to $\vary$ are defined similarly, and overall there are at most $n$ standard line segments between $\varx$ and $\vary$.

The above description suggests a metric on $\Rn$ in which the distance between 
$\varx$ and $\vary$ is the sum of the passing time when traveling from $\varz$ to $\varx$ and the time from $\varz$ to $\vary$. This is indeed the definition of the tropical distance (see \cite{CGQ04}, \cite{Pue14}):
\begin{definition}
	\label{def:dist}
	The {\it tropical distance} in $\Projn$ between $\varx = (x_1 : \ldots : x_{n+1})$ and $\vary = (y_1 : \ldots : y_{n+1})$ is
	\begin{equation}
		\label{eq:dist}
		d_{\mathrm{tr}}(\varx, \vary) := \max_{1 \leq i,j \leq n+1} \{x_i - y_i -x_j + y_j\}.
	\end{equation}
	\end{definition}
	When identifying $\Projn$ with $\Rn$, as done above, the tropical distance between $\varx=(x_1,\ldots,x_n)$ and $\vary=(y_1,\ldots,y_n)$ is
	\begin{eqnarray}
		d_{\mathrm{tr}}(\varx, \vary) &:=& \max\{\max_{1 \leq i \leq n} \{\abs{x_i - y_i}\}, \max_{1 \leq i,j \leq n} \{x_i - y_i -x_j + y_j\}\} \\
		&=& \max\{\max_{1 \leq i \leq n} \{x_i - y_i\},0\}
		- \min\{ \min_{1 \leq i \leq n} \{ x_i - y_i\}, 0\}. \nonumber
	\end{eqnarray}
\begin{remark}
	When representing the elements of $\Rn$ using $n+1$ coordinates, as done in Subsection~\ref{subsec:coord}, then the distance is according to formula~\eqref{eq:dist}.
\end{remark} 

The tropical distance defines a {\it tropical metric} on $\Projn$ and $\Rn$ that is positioned between $l^1$ metric (Manhattan metric) and $l^\infty$ metric (Chebyshev metric).
The distance between two points in those three metrics is measured with respect to shortest paths that are piecewise linear with non-degenerate line segments of directions $(x_1,\ldots,x_n)$ that satisfy the following conditions.
In $l^1$ metric the line segments are parallel to the coordinate axes, that is, all $x_i$ are $0$ except for one $x_j \in \{1,-1\}$.
In the tropical metric all $x_i \in \{0,1\}$ or all $x_i \in \{0,-1\}$.
In $l^\infty$ metric all $x_i \in \{0,1,-1\}$. The above direction vectors $(x_1,\ldots,x_n)$ are all unit vectors in the relevant metrics. Hence, we have the following bi-Lipschitz comparison:
$$d_{\infty}(\varx, \vary) \leq d_{\mathrm{tr}}(\varx, \vary) \leq 2d_{\infty}(\varx, \vary)$$
and
$$d_{\mathrm{tr}}(\varx, \vary) \leq d_{1}(\varx, \vary) \leq nd_{\mathrm{tr}}(\varx, \vary).$$

The tropical norm of a vector is its distance from the origin.
\begin{definition}
	The {\it tropical norm} of $\varx = (x_1 : \ldots : x_{n+1}) \in \Projn$ is
	\begin{equation}
		\tnorm{\varx} := \max_{1 \leq i, j \leq n+1} \{x_i - x_j\}.
	\end{equation}
	In $\Rn$ the tropical norm of $\varx=(x_1,\ldots,x_n)$ is
	\begin{equation}
	 	\tnorm{\varx} := \max\{\max_{1 \leq i \leq n} \{x_i\}, 0\} - 
		\min \{\min_{1 \leq i \leq n} \{x_i\}, 0\}.
	\end{equation}
\end{definition}

For example, $\tnorm{(-3,-2,1)} = 4$ and $\tnorm{(-3,-2,-1)} = 3$, whereas \\
$\tnorm{(-3 : -2 :-1)} = 2$.
\begin{remark}
	The tropical line segment between $\varx$ and $\vary$ in the max-plus setting is defined in a similar way to the one in the min-plus setting. It passes through the vertex $\varz$ with entries $z_i = \max(x_i,y_i)$ and the tropical line segments from $\varz$ to $\varx$ and from $\varz$ to $\vary$ consist of (standard) line segments in directions of vectors with entries in $\{0,-1\}$.
	The defined metric is exactly the same in both settings. 
\end{remark}
\section{Tropical geodesics}
\label{sec:geodesics}
Part of the results in this section can be found in the book of Joswig \cite{Jos21}.

Having the tropical metric at hand, we are able to measure the tropical length of any piecewise linear line which decomposes into finitely many tropical line segments.
Next, we extend this notion of length in a standard way to more general curves.
\begin{definition}
	\label{def:length}
	Let $\gamma : [a,b] \to \Rn$ be a finite parametrically-defined rectifiable curve (that is, the mapping $\gamma$ is continuous, Lipschitz and almost everywhere differentiable).
	Let
	\begin{equation}
		l_{\mathrm{tr}, \varepsilon}(\gamma) := \inf_{\mathcal{T}_{\alpha}=(t_{\alpha,i})_{i=0}^{n_\alpha}} \left\{ \sum_{i=1}^{n_{\alpha}} d_{\mathrm{tr}}(\gamma(t_{\alpha,i-1}), \gamma(t_{\alpha,i})) \right\},
	\end{equation}
	where the infimum is over all partitions $\mathcal{T}_{\alpha}=(t_{\alpha,i})_{i=0}^{n_\alpha}$ of $[a,b]$ satisfying $a=t_{\alpha,0} < t_{\alpha,1} < \cdots < t_{\alpha,n_{\alpha}}=b$, with $d_{\mathrm{tr}}(\gamma(t_{\alpha,i-1}), \gamma(t_{\alpha,i})) \leq \varepsilon$, for all $i$. Then the {\it tropical length} of $\gamma$ is
	\begin{equation}
		l_{\mathrm{tr}}(\gamma) := \lim_{\varepsilon \to 0^+} l_{\mathrm{tr},\varepsilon}(\gamma).
	\end{equation}
\end{definition}
\begin{remark}
	An equivalent definition of the tropical length of a rectifiable curve $\gamma : [a,b] \to \Rn$ is by its total variation:
	\begin{equation}
		l_{\mathrm{tr}}(\gamma) := \sup_{\mathcal{T}_{\alpha}=(t_{\alpha,i})_{i=0}^{n_\alpha}} \left\{ \sum_{i=1}^{n_{\alpha}} d_{\mathrm{tr}}(\gamma(t_{\alpha,i-1}), \gamma(t_{\alpha,i})) \right\},
	\end{equation}
	where the supremum is over all partitions $\mathcal{T}_{\alpha}=(t_{\alpha,i})_{i=0}^{n_\alpha}$ of $[a,b]$ satisfying $a=t_{\alpha,0} < t_{\alpha,1} < \cdots < t_{\alpha,n_{\alpha}}=b$.
\end{remark}
\begin{example}
	\label{ex:trop_circumference}
	The tropical circumference of a standard circle $C \subset \RR^2$ of radius $R$ equals the tropical perimeter of a circumscribing tropical polygon. It follows from the fact that by partitioning the circle into six arcs through the points of tangency ${\bf q_i}$, as seen in Figure~\ref{fig:circle_length}, we can already calculate the exact tropical length of the circle and any refined partitioning will not change the result.
	The length of each arc between two neighboring ${\bf q_i}$ is the absolute value of the difference in either their $x$ coordinates or their $y$ coordinates or the sum of both differences.
	The length of each of the arcs $({\bf q_1q_2}), ({\bf q_2q_3}), ({\bf q_4q_5})$ and $({\bf q_5q_0})$ is $\cos (\frac{\pi}{4})R = \frac{\sqrt{2}}{2}R$ (the difference in the $x$-coordinates or the $y$-coordinates) and the length of each of the arcs $({\bf q_0q_1})$ and $({\bf q_3q_4})$ is $2R$.
	Thus, the tropical circumference of the circle is
	\begin{equation}
		l_{\mathrm{tr}}(C) = (4 +2 \sqrt{2})R \approx 6.828R,
	\end{equation}
	which is greater than the circumference of a standard circle of the same radius.
\end{example}

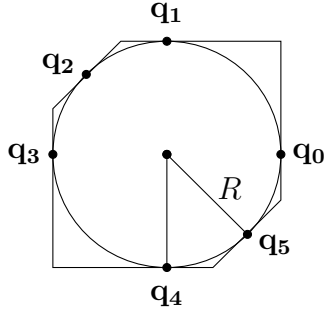
\begin{figure}[h]
	\centering
	\begin{tikzpicture}[scale=1.5]
		\draw (0,0);
		\begin{scope}
			\draw (-1.,-1.) -- (0.404,-1) -- (1,-0.404) -- (1,1) -- (-0.404,1) -- (-1,0.404) -- cycle;
			\filldraw [black] (0,0) circle (1.pt);
			\filldraw [black] (0,-1.) circle (1.pt);
			\filldraw [black] (0.707,-0.707) circle (1.pt);
			\filldraw [black] (1,0) circle (1.pt);
			\filldraw [black] (0,1) circle (1.pt);
			\filldraw [black] (-0.707,0.707) circle (1.pt);
			\filldraw [black] (-1,0) circle (1.pt);
			\draw[fill=none](0,0) circle (1.) node [black,yshift=-2cm] {};
			\draw[fill=black](0,0) circle (1 pt) node {};
			\draw(0,0) -- (0.707,-0.707) {};
			\draw(0,0) -- (0,-1) {};
			\node at (0.55,-0.3) {$R$};
			\node at (0,-1.25) {${\bf q_4}$};
			\node at (0.95,-0.8) {${\bf q_5}$};
			\node at (1.25,0) {${\bf q_0}$};
			\node at (0,1.25) {${\bf q_1}$};
			\node at (-0.95,0.8) {${\bf q_2}$};
			\node at (-1.25,0) {${\bf q_3}$};
		\end{scope}
	\end{tikzpicture}
	\caption{Tropical circumference of a standard circle}
	\label{fig:circle_length}
\end{figure}

\begin{definition}
	A curve $\gamma$ between the points $\varx$ and $\vary$ is a {\it tropical geodesic} if $l_{\mathrm{tr}}(\gamma) = d_{\mathrm{tr}}(\varx, \vary)$ (see also \cite[Ch. 5]{Jos21}).
\end{definition}

\begin{remarks}
	\begin{enumerate}
		\item Unless a standard line segment parallel to the tropical coordinate axes connects the points $\varx$ and $\vary$, there are infinitely many tropical geodesics between these points.
		\item Let $\gamma : [a,b] \to \Rn$ be a rectifiable curve with $\gamma(a)=\varx$ and $\gamma(b)=\vary$. Then $\gamma$ is a geodesic between $\varx$ and $\vary$ if and only if, for all $a \leq t < b$, the direction of $\gamma$ at time $t$ is confined to the parallelotope defined by $\gamma(t)$ and $\vary$ and which is bounded by hyperplanes of the form described in Theorem~\ref{thm:trop:geod_Rn}.
		In case $n=2$, a detailed description of the defined parallelogram is given in the proof of Proposition~\ref{prop:geod_two_points}. In fact, in $\Rn$, the tropical geodesic curves $\gamma$ between $\varx$ and $\vary$ are those for which, for each $i \neq j$, the orthogonal projection $p_{ij}(\gamma)$ on the $X_iX_j$-plane is a tropical curve in the plane between $p_{ij}(\varx)$ and $p_{ij}(\vary)$.
	\end{enumerate}
\end{remarks}
	
\begin{examples}
	\begin{enumerate}
		\item The standard line segment between $\varx$ and $\vary$ is a tropical geodesic.
		\item Each of the circle arcs between two consecutive points ${\bf q_i}$ in Example~\ref{ex:trop_circumference} is a tropical geodesic.
		\item We saw that the min tropical line segment from $\varx$ to $\vary$ in $\Rn$ is composed of specific standard line segments $\gamma_i$, $i=1,\ldots,m$, $m \leq n$. The max tropical line segment from $\varx$ to $\vary$ is composed of the same line segments but in reverse order.
		 Suppose now that we partition each line segment $\gamma_i$ into several subsegments $\gamma_{i_j}$ and then form a piecewise linear curve from $\varx$ to $\vary$ by concatenating all these line segments in any order we choose. Then the resulting piecewise linear curve is also a tropical geodesic. We say that such a curve is a {\it tropical piecewise linear geodesic} between $\varx$ and $\vary$.
	\end{enumerate}
	\end{examples}
Given two points $\varx, \vary \in \Rn$, we denote by $g_{\mathrm{tr}}(\{\varx, \vary \})$ the union of all tropical geodesics between $\varx$ and $\vary$.
The extension to $g_{\mathrm{tr}}(S)$ for an arbitrary set $S \subset \Rn$ is clear: $g_{\mathrm{tr}}(S) := \bigcup_{\varx, \vary \in S} \, g_{\mathrm{tr}}(\{\varx, \vary \})$.
\begin{proposition}
	The following are equivalent characterizations of $g_{\mathrm{tr}}(\{\varx, \vary \})$.
	\begin{enumerate}
		\item The union of all tropical piecewise linear geodesics between $\varx$ and $\vary$.
		\item The set of all points $\varz$, such that $d_{\mathrm{tr}}(\varx, \varz) + d_{\mathrm{tr}}(\varz, \vary) = d_{\mathrm{tr}}(\varx, \vary)$.
	\end{enumerate}
	
\end{proposition}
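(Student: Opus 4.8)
The plan is to prove the cyclic chain of inclusions
\[
(1)\ \subseteq\ g_{\mathrm{tr}}(\{\varx,\vary\})\ \subseteq\ (2)\ \subseteq\ (1),
\]
so that the three sets coincide. The first inclusion is immediate: a tropical piecewise linear geodesic is in particular a tropical geodesic, hence each of its points lies in $g_{\mathrm{tr}}(\{\varx,\vary\})$ by definition.

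For $g_{\mathrm{tr}}(\{\varx,\vary\})\subseteq(2)$, I would first record that tropical length is additive under concatenation: if $\gamma\colon[a,b]\to\Rn$ is rectifiable and $a\le c\le b$, then $\ltr{\gamma}=\ltr{\gamma|_{[a,c]}}+\ltr{\gamma|_{[c,b]}}$. Via the total-variation description of $\ltr{\cdot}$ in the Remark after Definition~\ref{def:length}, this holds because adjoining the point $c$ to any partition does not decrease the associated sum (triangle inequality), so the supremum may be restricted to partitions containing $c$, for which the sum splits. Now let $\varz=\gamma(c)$ lie on a tropical geodesic $\gamma$ from $\varx$ to $\vary$. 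Bounding each half below by its one-interval partition gives $\ltr{\gamma|_{[a,c]}}\ge d_{\mathrm{tr}}(\varx,\varz)$ and $\ltr{\gamma|_{[c,b]}}\ge d_{\mathrm{tr}}(\varz,\vary)$, so
\[
d_{\mathrm{tr}}(\varx,\vary)=\ltr{\gamma}=\ltr{\gamma|_{[a,c]}}+\ltr{\gamma|_{[c,b]}}\ge d_{\mathrm{tr}}(\varx,\varz)+d_{\mathrm{tr}}(\varz,\vary)\ge d_{\mathrm{tr}}(\varx,\vary),
\]
the last step being the triangle inequality; equality throughout forces $\varz\in(2)$.

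For $(2)\subseteq(1)$ the plan is constructive. Given $\varz$ with $d_{\mathrm{tr}}(\varx,\varz)+d_{\mathrm{tr}}(\varz,\vary)=d_{\mathrm{tr}}(\varx,\vary)$, let $\gamma$ be the curve obtained by concatenating the min tropical line segment from $\varx$ to $\varz$ with the min tropical line segment from $\varz$ to $\vary$; it is piecewise linear and passes through $\varz$. Since each min tropical line segment realizes the tropical distance between its endpoints --- the ``clock'' reading underlying the definition of $d_{\mathrm{tr}}$ in Section~\ref{sec:metric}, and recorded among the Examples after the definition of tropical geodesic --- concatenation-additivity yields $\ltr{\gamma}=d_{\mathrm{tr}}(\varx,\varz)+d_{\mathrm{tr}}(\varz,\vary)=d_{\mathrm{tr}}(\varx,\vary)$, the last equality being the hypothesis on $\varz$. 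Hence $\gamma$ is a piecewise linear tropical geodesic from $\varx$ to $\vary$ through $\varz$, so $\varz\in(1)$.

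The step I expect to be the main obstacle is this last one. One needs to know that the class of tropical piecewise linear geodesics between $\varx$ and $\vary$ coincides with the class of piecewise linear curves from $\varx$ to $\vary$ of tropical length $d_{\mathrm{tr}}(\varx,\vary)$ --- equivalently, piecewise linear curves whose direction at every time lies in the parallelotope attached to the current point and $\vary$ (the Remarks after the definition of tropical geodesic, cut out by the inequalities of Theorem~\ref{thm:trop:geod_Rn}). Granting this identification, the concatenation $\gamma$ above qualifies and $(2)\subseteq(1)$ is complete; the verification is exactly where the equality hypothesis on $\varz$ is used, and the cleanest route is probably to reduce, through those Remarks, to the planar statement of Proposition~\ref{prop:geod_two_points} applied to each coordinate projection $p_{ij}$. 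By comparison, the concatenation-additivity of $\ltr{\cdot}$ and the geodesy of the min tropical line segment are routine consequences of the definitions.
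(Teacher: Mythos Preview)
The paper gives no proof (``The proof is left to the reader''), so there is nothing to compare against; I assess your argument on its own.

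Your chain $(1)\subseteq g_{\mathrm{tr}}(\{\varx,\vary\})\subseteq(2)$ is correct, and in fact $(2)\subseteq g_{\mathrm{tr}}(\{\varx,\vary\})$ also follows immediately: for $\varz\in(2)$ the concatenation of the straight segments $\varx\to\varz$ and $\varz\to\vary$ has tropical length $d_{\mathrm{tr}}(\varx,\varz)+d_{\mathrm{tr}}(\varz,\vary)=d_{\mathrm{tr}}(\varx,\vary)$, hence is a geodesic through $\varz$. So $g_{\mathrm{tr}}(\{\varx,\vary\})=(2)$.

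The obstacle you flag in $(2)\subseteq(1)$ is not a technicality to be patched but a genuine failure for $n\ge 3$, and your proposed remedy is misstated. The class of piecewise linear curves from $\varx$ to $\vary$ of tropical length $d_{\mathrm{tr}}(\varx,\vary)$ is strictly larger than the class of \emph{tropical} piecewise linear geodesics in the paper's sense (rearrangements of the pieces of the min tropical line segment); already the Euclidean segment $\varx\to\vary$ lies in the former but typically not the latter. Worse, take $n=3$, $\varx=(0,0,0)$, $\vary=(3,2,1)$, $\varz=(1,0,1)$. Then $d_{\mathrm{tr}}(\varx,\varz)=1$ and $d_{\mathrm{tr}}(\varz,\vary)=2$, so $\varz\in(2)$. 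The min tropical line segment from $\varx$ to $\vary$ has pieces in directions $(1,1,1),(1,1,0),(1,0,0)$, each of length $1$, so the set $(1)$ is the zonotope $\{a(1,1,1)+b(1,1,0)+c(1,0,0):0\le a,b,c\le 1\}$; solving for $(1,0,1)$ forces $a=1$, $b=-1$, so $\varz\notin(1)$. Hence $(1)\subsetneq(2)=g_{\mathrm{tr}}(\{\varx,\vary\})$. Under the paper's narrow definition of ``tropical piecewise linear geodesic'', item~(1) is \emph{not} a characterization of $g_{\mathrm{tr}}(\{\varx,\vary\})$; the statement holds as written only for $n\le 2$, or if ``tropical piecewise linear geodesic'' is read more broadly as ``piecewise linear curve that is a tropical geodesic''. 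Your instinct that this step was the crux was exactly right.
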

The proof is left to the reader.
The next definitions are similar to the definitions of standard convexity and convex hull in Euclidean geometry.
\begin{definition}
	A set $S \subset \Rn$ is {\it tropically geodesic} if it contains all tropical geodesics between every two points $\varx, \vary \in S$.
\end{definition}
\begin{remark}
	In \cite{Jos21} a tropically geodesic set is called bi-tropically convex.
\end{remark}
\begin{definition}
	Given a set $S \subset \Rn$, the {\it tropical geodesic hull} of $S$, $\mathrm{geod}_{\mathrm{tr}}(S)$, is the set satisfying each of the following equivalent statements.
	\begin{enumerate}
		\item The unique minimal tropically geodesic set that contains $S$.
		\item The intersection of all tropically geodesic sets that contain $S$.
		\item The set $\bigcup_{i \geq 0} g_{\mathrm{tr}}^i(S)$, where $g_{\mathrm{tr}}^0(S) = S$ and $g_{\mathrm{tr}}^{i+1}(S) = g_{\mathrm{tr}}(g_{\mathrm{tr}}^i(S))$, for $i \geq 0$. 
		\item The closure of $S$ under both min and max tropical linear combinations (see Subsection~\ref{subsec:tr_convexity} for the definitions). That is, the minimal set containing $S$ that is both min- and max-tropically convex.
	\end{enumerate} 
\end{definition}
\begin{example}
	Let $S = \{(0,0,0),(1,0,0),(1,1,0),(1,1,1)\} \subset \Rn$.
	These are the vertices of the three-dimensional simplex
	$$\Delta = \{ (x,y,z) \,:\, 0 \leq z \leq y \leq x \leq 1 \}$$ whose edges are all of tropical length 1.
	Then $g_{\mathrm{tr}}^i(S)$, for $i=0,1$, is the $i$-th skeleton of $\Delta$ and $g_{\mathrm{tr}}^2(S) = \Delta$. To see that $g_{\mathrm{tr}}^2(S) = \Delta$, let $\varp = (a,b,c)$ be a point of the simplex which is not on its one-dimensional skeleton. Then $\varp$ is on the tropical geodesic which is the line segment between the points $(\frac{a-b}{1-b},0,0)$ and $(1,1,\frac{c}{b})$ that are on the edges of $\Delta$. Clearly, $g_{\mathrm{tr}}^i(S)=g_{\mathrm{tr}}^2(S)$ for $i>2$, hence, $g_{\mathrm{tr}}^2(S) = \mathrm{geod}_{\mathrm{tr}}(S)$, the tropical geodesic hull of $S$.
\end{example}

\subsection{Compact tropically geodesic sets in the plane} 
\begin{proposition}
	\label{prop:geod_two_points}
	Let $\varp, \varq \in \RR^2$ be two points that are not on the same (standard) line in direction  $(1,0)$, $(0, 1)$ or $(1,1)$. Then $S=\mathrm{geod}_{\mathrm{tr}}(\{\varp, \varq\})$ is the region enclosed by the min and max tropical line segments between $\varp$ and $\varq$, that is,
	\begin{equation}
		\label{eq:parallelogram}
		S =\{(x,y) \in \RR^2 \,:\, (a \leq x \leq a'), (b \leq y \leq b'), (c \leq y-x \leq c')\},
	\end{equation}
	where $a,a',b,b',c,c' \in \RR$ define the supporting lines in directions $(0,1)$, $(1, 0)$ and $(1,1)$.
\end{proposition}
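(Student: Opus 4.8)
The plan is to deduce the whole statement from one metric lemma. Throughout write $\phi_1(x,y)=x$, $\phi_2(x,y)=y$, $\phi_3(x,y)=y-x$ for the three linear functionals on $\RR^2$ (so $\phi_1+\phi_3=\phi_2$); from the formula for $d_{\mathrm{tr}}$ in $\RR^2$ one has $d_{\mathrm{tr}}(\varu,\varv)=\max_{k=1,2,3}|\phi_k(\varu)-\phi_k(\varv)|$. In this language the polytope on the right of \eqref{eq:parallelogram} is exactly $S=\{\varz\in\RR^2 : \phi_k(\varz)\text{ lies between }\phi_k(\varp)\text{ and }\phi_k(\varq)\text{ for }k=1,2,3\}$, i.e.\ the set of points that are ``between $\varp$ and $\varq$'' in each of the three coordinates, and the hypothesis on $\varp,\varq$ says precisely that all three closed intervals bounded by $\phi_k(\varp),\phi_k(\varq)$ are nondegenerate. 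The lemma is: \emph{for arbitrary $\varu,\varz,\varv\in\RR^2$, $d_{\mathrm{tr}}(\varu,\varz)+d_{\mathrm{tr}}(\varz,\varv)=d_{\mathrm{tr}}(\varu,\varv)$ holds if and only if $\phi_k(\varz)$ lies between $\phi_k(\varu)$ and $\phi_k(\varv)$ for each $k=1,2,3$.}

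Granting the lemma, the proposition follows quickly. Taking $(\varu,\varv)=(\varp,\varq)$ and using the earlier characterization of $g_{\mathrm{tr}}(\{\varp,\varq\})$ as the set of $\varz$ with $d_{\mathrm{tr}}(\varp,\varz)+d_{\mathrm{tr}}(\varz,\varq)=d_{\mathrm{tr}}(\varp,\varq)$, the lemma gives $g_{\mathrm{tr}}(\{\varp,\varq\})=S$. The lemma also shows that $S$ is tropically geodesic: if $\varu,\varv\in S$ and $\varz$ is any point on a tropical geodesic between them, then $d_{\mathrm{tr}}(\varu,\varz)+d_{\mathrm{tr}}(\varz,\varv)=d_{\mathrm{tr}}(\varu,\varv)$, so each $\phi_k(\varz)$ lies between $\phi_k(\varu)$ and $\phi_k(\varv)$, hence --- since $\varu,\varv\in S$ --- between $\phi_k(\varp)$ and $\phi_k(\varq)$, so $\varz\in S$. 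As $S$ is a tropically geodesic set containing $\varp$ and $\varq$, minimality of the tropical geodesic hull gives $\mathrm{geod}_{\mathrm{tr}}(\{\varp,\varq\})\subseteq S$, while $S=g_{\mathrm{tr}}(\{\varp,\varq\})\subseteq\mathrm{geod}_{\mathrm{tr}}(\{\varp,\varq\})$; hence $\mathrm{geod}_{\mathrm{tr}}(\{\varp,\varq\})=S$.

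To prove the lemma, represent the points by triples with third coordinate $0$ (so that $d_{\mathrm{tr}}$ becomes $\max$ minus $\min$ of the difference triple, as in Section~\ref{sec:proj}) and put $a_i=\phi_i(\varu)-\phi_i(\varz)$ and $b_i=\phi_i(\varz)-\phi_i(\varv)$ for $i=1,2$, with $a_3=b_3=0$; then $a_i+b_i=\phi_i(\varu)-\phi_i(\varv)$, the three numbers $|\phi_k(\varu)-\phi_k(\varz)|$ are the pairwise differences $|a_i-a_j|$, and likewise for $\varv$ with the $b_i$. From $\max_i(a_i+b_i)\le\max_i a_i+\max_i b_i$ and $\min_i(a_i+b_i)\ge\min_i a_i+\min_i b_i$ one gets $d_{\mathrm{tr}}(\varu,\varv)\le d_{\mathrm{tr}}(\varu,\varz)+d_{\mathrm{tr}}(\varz,\varv)$ in general, with equality if and only if some index attains both $\max_i a_i$ and $\max_i b_i$ and some index attains both $\min_i a_i$ and $\min_i b_i$. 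It then remains to check that this index condition is equivalent to ``$(a_i-a_j)(b_i-b_j)\ge 0$ for all $i,j$'', which is exactly the condition that each $\phi_k(\varz)$ lie between $\phi_k(\varu)$ and $\phi_k(\varv)$. One implication: if all those products are $\ge 0$, then $a_i>a_j$ forces $b_i\ge b_j$, so the index sets $\{i:a_i\text{ maximal}\}$ and $\{i:b_i\text{ maximal}\}$ meet (were they disjoint, pick $i$ with $a_i$ maximal but $b_i$ not and $j$ with $b_j$ maximal but $a_j$ not; then $a_i>a_j$ yet $b_i<b_j$, a contradiction), and similarly for the minima. The converse --- the only place that uses that there are just three coordinates --- goes by contradiction: if, say, $a_1>a_2$ but $b_1<b_2$, then the only possible ``double-maximum'' index is $3$ and the only possible ``double-minimum'' index is $2$, but $\min_i b_i=b_2$ is impossible since $b_1<b_2$, so equality fails. (The lemma can alternatively be obtained from a longer sign-case analysis of the explicit $d_{\mathrm{tr}}$-formula in $\RR^2$.) I expect this converse to be the only genuinely delicate point of the proof.

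It remains to match $S$ with the region enclosed by the two tropical segments and to see it is a parallelogram. Under nondegeneracy, the largest of the three lengths $\ell_k=|\phi_k(\varp)-\phi_k(\varq)|$ is attained at a unique index $k_0$, and since $\phi_{k_0}$ is a combination of the other two functionals with coefficients $\pm 1$, the fact that $\ell_{k_0}$ dominates forces $\ell_{k_0}=\ell_{k_1}+\ell_{k_2}$; consequently the two supporting lines $\{\phi_{k_0}=\text{const}\}$ are redundant in the description of $S$, so $S$ is a parallelogram, two of whose pairs of parallel edges have directions among $(1,0),(0,1),(1,1)$ (the two complementary to $\phi_{k_0}$). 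Comparing with the clock/staircase description of Section~\ref{sec:metric}, the minimal tropical line segment from $\varp$ to $\varq$ is the pair of edges of $S$ meeting at $(\min(p_1,q_1),\min(p_2,q_2))$ and the maximal one is the pair meeting at $(\max(p_1,q_1),\max(p_2,q_2))$; both are tropical geodesics and therefore lie in $\mathrm{geod}_{\mathrm{tr}}(\{\varp,\varq\})=S$, and together they trace out $\partial S$, so $S$ is exactly the region they enclose.
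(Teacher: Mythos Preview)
Your approach is correct in substance and genuinely different from the paper's. The paper argues by a direct case split on the signs of $q_1-p_1$, $q_2-p_2$ and on which of $|q_1-p_1|$, $|q_2-p_2|$ is larger, and in each of the three cases writes down the min/max tropical segments explicitly and reads off the parallelogram they bound. You instead extract a single metric lemma characterising the equality case of the tropical triangle inequality via the three functionals $\phi_1,\phi_2,\phi_3$, and deduce both $g_{\mathrm{tr}}(\{\varp,\varq\})=S$ and the tropical geodesicity of $S$ from it in one stroke. Your route is more conceptual, avoids case analysis, and is essentially the two-dimensional prototype of what the paper later does in Theorem~\ref{thm:trop:geod_Rn}; the paper's route is more hands-on and makes the shape of the parallelogram (and which pair of constraints is redundant) immediately visible in each case.

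Two small points to clean up. First, in the converse direction of your lemma the bookkeeping slips: from $a_1>a_2$ and $b_1<b_2$ one deduces that the only candidate for a double-maximum index is $3$ \emph{and} the only candidate for a double-minimum index is also $3$ (not $2$ as you wrote), and then $i^*=j^*=3$ forces all $a_i$ equal, contradicting $a_1>a_2$. The argument still goes through, just with the corrected index. Second, your closing identification of the min tropical segment with ``the pair of edges of $S$ meeting at $(\min(p_1,q_1),\min(p_2,q_2))$'' is not right in Cases~2 and~3 of the paper (where that point equals $\varp$ and the bend of the min segment lies at a different vertex of $S$); what is true in all nondegenerate cases is that the min and max tropical segments are the two complementary two-edge paths in $\partial S$ joining $\varp$ to $\varq$, so together they trace $\partial S$. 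This does not affect your main equality $\mathrm{geod}_{\mathrm{tr}}(\{\varp,\varq\})=S$, which you have already established.
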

\begin{proof}
	Let $\varp = (p_1,p_2)$ and $\varq = (q_1,q_2)$.
	It suffices to consider all tropical {\it linear} geodesics between $\varp$ and  $\varq$. Up to interchanging between $\varp$ and $\varq$, we distinguish between three cases.
	
	Case 1: $p_1 < q_1$ and $p_2 > q_2$. Then the union of the min and max tropical line segments between the two points forms a rectangle. All tropical piecewise linear geodesics from $\varp$ to  $\varq$ consist of (standard) line segments in direction $(1,0)$, whose total length is $q_1-p_1$, and line segments in direction $(0,-1)$, whose total length is $p_2-q_2$. Clearly, all these piecewise linear curves are contained in the formed rectangle:
	$$
	p_1 \leq x \leq q_1, \qquad q_2 \leq y \leq p_2
	$$
	(where the bounds $q_2-q_1 \leq y-x \leq p_2-p_1$ are redundant)
	
	Case 2: $p_1 < q_1$ and $p_2 < q_2$, where $q_1-p_1 < q_2-p_2$. Then the union of the min and max tropical line segments between the two points forms a parallelogram. All tropical piecewise linear geodesics from $\varp$ to  $\varq$ consist of (standard) line segments in direction $(1,1)$, whose total tropical length is $q_1-p_1$, and line segments in direction $(0,1)$, whose total length is $(q_2-p_2) - (q_1-p_1)$. Then all these piecewise linear curves are contained in the formed parallelogram:
	$$
	p_1 \leq x \leq q_1, \qquad p_2-p_1 \leq y-x \leq q_2-q_1
	$$
	(where the bounds $p_2 \leq y \leq q_2$ are redundant).
	
	Case 3: $p_1 < q_1$ and $p_2 < q_2$, as in case 2, but with $q_1-p_1 > q_2-p_2$. 
	Then the union of the min and max tropical line segments between the two points forms a parallelogram. All tropical piecewise linear geodesics from $\varp$ to  $\varq$ consist of (standard) line segments in direction $(1,1)$, whose total tropical length is $q_2-p_2$, and line segments in direction $(1,0)$, whose total length is $(q_1-p_1) - (q_2-p_2)$. Then all these piecewise linear curves are contained in the formed parallelogram:
	$$
	p_2 \leq y \leq q_2, \qquad q_2-q_1 \leq y-x \leq p_2-p_1
	$$
	(where the bounds $p_1 \leq x \leq q_1$ are redundant).
\end{proof}
\begin{example}
	Figure~\ref{geod_two_points} shows the min and max tropical line segments, as well as the tropical geodesic hull, between two points in $\RR^2$.
\end{example}
\begin{figure}[t]
	\centering
	{\tiny
		\begin{tikzpicture}[scale=0.8]
			\filldraw [black] (0,0) circle (0.5mm);
			\filldraw [black] (1,2) circle (0.5mm);
			\draw (0,0) -- (1,1) -- (1,2);
			\node at (0.0,-.4) {\normalsize $\varp$};
			\node at (1.0,2.4) {\normalsize $\varq$};
			\node at (0.6,-1) {\normalsize (a)};
			\begin{scope}[xshift=0.3\textwidth]
				\filldraw [black] (0,0) circle (0.5mm);
				\filldraw [black] (1,2) circle (0.5mm);
				\draw (0,0) -- (0,1) -- (1,2);
				\node at (0.0,-.4) {\normalsize $\varp$};
				\node at (1.0,2.4) {\normalsize $\varq$};
				\node at (0.6,-1) {\normalsize (b)};
			\end{scope}
			\begin{scope}[xshift=0.6\textwidth]
				\filldraw [black] (0,0) circle (0.5mm);
				\filldraw [black] (1,2) circle (0.5mm);
				\draw (0,0) -- (1,1) -- (1,2) -- (0,1) -- cycle [fill=blue!40, opacity=0.5];
				\node at (0.0,-.4) {\normalsize $\varp$};
				\node at (1.0,2.4) {\normalsize $\varq$};
				\node at (0.6,-1) {\normalsize (c)};
			\end{scope}
		\end{tikzpicture}
	}
	\caption{(a) Min tropical line segment; (b) max tropical line segment; (c) tropical geodesic hull of $\{\varp,\varq\}$}
	\label{geod_two_points}
\end{figure}
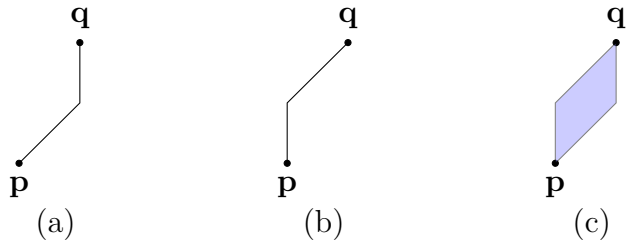
\begin{proposition}
	\label{prop:18forms}
	A non-empty compact set in $\RR^2$ that is tropically geodesic is either a point, or a line segment in direction $(1,0)$, $(0, 1)$ or $(1,1)$, or a two-dimensional tropical polygon (in fact, tropical triangle) in one of the 18 forms shown in Figure~\ref{fig:convexpolygons}.
\end{proposition}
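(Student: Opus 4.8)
The plan is to show that a compact tropically geodesic set in $\RR^2$ that is two-dimensional is a convex polygon every edge of which is parallel to $(1,0)$, $(0,1)$ or $(1,1)$, i.e.\ a region of the form $\{a\le x\le a',\ b\le y\le b',\ c\le y-x\le c'\}$ as in \eqref{eq:parallelogram}; the list of eighteen shapes then follows by enumerating such polygons.

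First I would dispose of the degenerate cases. Since the standard line segment between two points is a tropical geodesic, every tropically geodesic set is classically convex. Hence a compact tropically geodesic set of dimension $0$ is a point, and one of dimension $1$ is a segment; were such a segment not parallel to $(1,0)$, $(0,1)$ or $(1,1)$, Proposition~\ref{prop:geod_two_points} would force the geodesic hull of its endpoints to be two-dimensional, a contradiction, whereas a segment in one of these three directions equals its own geodesic hull (any two of its points lie on a common line of that direction, so the geodesic between them is the joining subsegment). This settles the point and segment cases.

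For a two-dimensional compact tropically geodesic set $S$ — necessarily a compact convex region — the core claim is that $\partial S$ is a polygon with all edges parallel to $(1,0)$, $(0,1)$ or $(1,1)$. If a maximal boundary segment had some other direction $\varv$, its endpoints $\varp,\varq$ would lie on a common line of the non-special direction $\varv$, so by Proposition~\ref{prop:geod_two_points} $\mathrm{geod}_{\mathrm{tr}}(\{\varp,\varq\})\subseteq S$ would be a non-degenerate parallelogram having $\varp,\varq$ as opposite vertices, hence with points strictly on both sides of the line through that segment, contradicting that this line supports $S$. The same mechanism rules out curved boundary pieces: on a strictly convex arc of $\partial S$ the tangent direction turns, so the arc has a point with non-special tangent direction and thus a supporting line $T$; taking $\varp,\varq$ on the arc close to that point, the chord $\varp\varq$ has non-special direction, so $\mathrm{geod}_{\mathrm{tr}}(\{\varp,\varq\})\subseteq S$ is a parallelogram straddling the chord by a length proportional to its own length, while the arc departs from the chord only sublinearly in that length, so for $\varp,\varq$ close enough the parallelogram crosses $T$ and leaves $S$. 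Since the edge directions are monotone around $\partial S$ and lie among these three, there are at most six edges (at most two per direction), and $S$ equals the region $\{a\le x\le a',\ b\le y\le b',\ c\le y-x\le c'\}$ with $a=\min_S x,\ b=\min_S y,\ c=\min_S(y-x)$ and $a',b',c'$ the corresponding maxima (each bound finite by compactness and redundant precisely when it supports no edge); conversely every such region is tropically geodesic, since for two of its points on a common special line the geodesic is the joining segment and for any other pair the parallelogram of Proposition~\ref{prop:geod_two_points} fits inside it.

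It then remains to enumerate the non-empty two-dimensional such regions — equivalently the convex polygons with edge slopes in $\{0,\infty,1\}$ and at most two edges of each slope — sorted by number of edges: two triangles (a right triangle with horizontal leg, vertical leg and slope-$1$ hypotenuse, the right angle at the upper-left or the lower-right vertex); nine quadrilaterals (three parallelograms, with edge-slope multisets $\{0,0,\infty,\infty\}$, $\{0,0,1,1\}$, $\{\infty,\infty,1,1\}$, and six trapezoids, the multisets $\{0,0,\infty,1\}$, $\{0,\infty,\infty,1\}$, $\{0,\infty,1,1\}$ each in two mirror positions); six pentagons (one corner of a rectangle, or of a parallelogram with two slope-$1$ edges, truncated by the remaining slope, in two positions each); and one hexagon (all six bounds supporting edges) — giving $2+9+6+1=18$, as in Figure~\ref{fig:convexpolygons}. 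I expect the main obstacle to be the exclusion of curved boundary pieces: one must quantify that $\mathrm{geod}_{\mathrm{tr}}(\{\varp,\varq\})$ straddles the chord $\varp\varq$ by a definite fraction of its length as long as the chord direction is bounded away from $(1,0),(0,1),(1,1)$, and compare this with the sublinear deviation of the arc from its chord. Alternatively, by item~(4) of the definition of the geodesic hull a compact tropically geodesic set is a compact set that is both min- and max-tropically convex, hence a polytrope, and polytropes in $\TP^2$ are exactly the regions above, which lets one pass straight to the enumeration; the enumeration itself is routine casework.
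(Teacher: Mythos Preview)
Your proposal is correct and follows the same overall strategy as the paper: reduce to the description $\{a\le x\le a',\ b\le y\le b',\ c\le y-x\le c'\}$ via Proposition~\ref{prop:geod_two_points}, then enumerate the combinatorial types. Two presentational differences are worth noting. First, where the paper simply asserts that ``$S$ is the union of parallelograms with edges in direction $(1,0)$, $(0,1)$ or $(1,1)$, then the boundary of $S$ consists of line segments in these directions'' and invokes Euclidean convexity, you work harder, arguing separately against straight boundary segments of non-special direction and against strictly convex arcs; your treatment is more explicit here, though the quantitative comparison you flag (parallelogram width versus sublinear chord deviation) is not really needed---it suffices to observe that a supporting line of $S$ in a non-special direction must meet $S$ in a single point, and then a nearby chord in a non-special direction produces a parallelogram crossing that line. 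Second, the paper enumerates the eighteen shapes by starting from the hexagon and deleting edges subject to the rule ``no two adjacent edges may be removed,'' whereas you enumerate by edge-slope multisets; the two counts agree. Your closing remark that a compact tropically geodesic set is precisely a compact polytrope, which short-circuits the boundary analysis, is a genuine alternative the paper does not invoke.
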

\begin{proof}
 	Clearly, a point is tropically geodesic and the one-dimensional sets that are tropically geodesic are the line segments in direction $(1,0)$, $(0, 1)$ or $(1,1)$.
 	
	Otherwise, let	$S \subset \RR^2$ be a non-empty compact two-dimensional tropically geodesic set. Since, by Proposition~\ref{prop:geod_two_points}, $S$ is the union of parallelograms with edges in direction $(1,0)$, $(0, 1)$ or $(1,1)$, then the boundary of $S$ consists of line segments in these directions. Since $S$ is also (Euclidean) convex, then it must be the intersection of half-planes with boundaries in these directions. That is, $S$ is of the form 
	\begin{eqnarray}
		\label{eq:18forms}
		S &=&\{(x,y) \in \RR^2 \,:\, (a \leq x \leq a'), (b \leq y \leq b'), (c \leq y-x \leq c'),  \\
		&  &\mbox{for some } a,a',b,b',c,c' \in \RR \}. \nonumber
	\end{eqnarray}
	Therefore, the number of possible shapes  for $S$ is limited. When ignoring the lengths and proportions between the edges, there are only 18 such shapes (see Table~\ref{tab:polytypes}). Starting with the maximal number of edges, $6$, where there is only one such shape, the construction of the regions with $5,4$ and $3$ edges is achieved by all possibilities of deleting edges from the $6$-edges polygon, with the restriction that no adjacent edges can be deleted (see Figure~\ref{fig:convexpolygons}). Finally, it can be seen that each of these polygons consists of three min (max) tropical line segments, so they are tropical triangles, and no other tropical triangles exist.
	
	In the other direction, it is clear that each of the regions shown in Figure~\ref{fig:convexpolygons} is tropically geodesic.
 \end{proof}
\begin{table}[h]
	\centering
	\begin{tabular}{|c|c|} 
		\hline
		\# edges & \# types \\
		\hline \hline
		3 & 2  \\
		\hline
		4 & 9  \\
		\hline
		5 & 6  \\
		\hline
		6 & 1  \\
		\hline
	\end{tabular}	
	\vspace{5pt}
	\caption{Number of types of the different Euclidean convex tropical polygons}
	\label{tab:polytypes}
\end{table}
 \begin{figure}[t]
 	\centering
 	{\tiny
 		\begin{tikzpicture}[scale=0.8]
 			\draw (0,0) -- (1,0) -- (1,1) -- cycle [fill=blue!40, opacity=0.5];
 			\node at (0.6,-.25) {$1$};
 			\begin{scope}[xshift=0.16\textwidth]
 				\draw (0,0) -- (1,1) -- (0,1) -- cycle [fill=blue!40, opacity=0.5];
 				\node at (0.6,-.25) {$2$};
 			\end{scope}
 			\begin{scope}[xshift=.32\textwidth]
 				\draw (0,0) -- (2,0) -- (2,1) -- (0,1) -- cycle [fill=blue!40, opacity=0.5];
 				\node at (1.1,-.25) {$3$};
 			\end{scope}
 			\begin{scope}[xshift=.56\textwidth]
 				\draw (0,0) -- (2,0) -- (2,1) -- (1,1) -- cycle [fill=blue!40, opacity=0.5];
 				\node at (1.1,-.25) {$4$};
 			\end{scope}
 			\begin{scope}[xshift=.8\textwidth]
 				\draw (0,0) -- (1,0) -- (2,1) -- (0,1) -- cycle [fill=blue!40, opacity=0.5];
 				\node at (0.6,-.25) {$5$};
 			\end{scope}
 			
 			\begin{scope}[yshift=-.2\textwidth]
 				\draw (0,0) -- (1,1) -- (1,2) -- (0,1) -- cycle [fill=blue!40, opacity=0.5];
 				\node at (0.6,-.25) {$6$};
 			\end{scope}
 			\begin{scope}[xshift=.16\textwidth,yshift=-.2\textwidth]
 				\draw (0,0) -- (1,0) -- (1,2) -- (0,1) -- cycle [fill=blue!40, opacity=0.5];
 				\node at (0.6,-.25) {$7$};
 			\end{scope}
 			\begin{scope}[xshift=0.32\textwidth,yshift=-.2\textwidth]
 				\draw (0,0) -- (1,1) -- (1,2) -- (0,2) -- cycle [fill=blue!40, opacity=0.5];
 				\node at (0.6,-.25) {$8$};
 			\end{scope}
 			\begin{scope}[xshift=.43\textwidth,yshift=-.2\textwidth]
 				\draw (0,0) -- (1,0) -- (3,2) -- (2,2) -- cycle [fill=blue!40, opacity=0.5];
 				\node at (0.6,-.25) {$9$};
 			\end{scope}
 			\begin{scope}[xshift=.68 \textwidth,yshift=-.2\textwidth]
 				\draw (0,0) -- (2,2) -- (1,2) -- (0,1) -- cycle [fill=blue!40, opacity=0.5];
 				\node at (1.1,-.25) {$10$};
 			\end{scope}
 			\begin{scope}[xshift=.8\textwidth,yshift=-.2\textwidth]
 				\draw (0,0) -- (1,0) -- (2,1) -- (2,2) -- cycle [fill=blue!40, opacity=0.5];
 				\node at (0.6,-.25) {$11$};
 			\end{scope}
 			
 			\begin{scope}[yshift=-.4\textwidth]
 				\draw (0,0) -- (2,0) -- (2,2) -- (1,2) -- (0,1) -- cycle [fill=blue!40, opacity=0.5];
 				\node at (1.1,-.25) {$12$};
 			\end{scope}
 			\begin{scope}[xshift=.18\textwidth,yshift=-.4\textwidth]
 				\draw (0,0) -- (1,0) -- (2,1) -- (2,2) -- (0,2) -- cycle [fill=blue!40, opacity=0.5];
 				\node at (0.6,-.25) {$13$};
 			\end{scope}
 			\begin{scope}[xshift=0.35\textwidth,yshift=-.4\textwidth]
 				\draw (0,0) -- (1,0) -- (3,2) -- (1,2) -- (0,1) -- cycle [fill=blue!40, opacity=0.5];
 				\node at (0.6,-.25) {$14$};
 			\end{scope}
 			\begin{scope}[xshift=.47\textwidth,yshift=-.4\textwidth]
 				\draw (0,0) -- (2,0) -- (3,1) -- (3,2) -- (2,2) -- cycle [fill=blue!40, opacity=0.5];
 				\node at (1.1,-.25) {$15$};
 			\end{scope}
 			\begin{scope}[xshift=.69 \textwidth,yshift=-.4\textwidth]
 				\draw (0,0) -- (1,0) -- (1.5,0.5) -- (1.5,2) -- (0,0.5) -- cycle [fill=blue!40, opacity=0.5];
 				\node at (0.6,-.25) {$16$};
 			\end{scope}
 			\begin{scope}[xshift=.83\textwidth,yshift=-.4\textwidth]
 				\draw (0,0) -- (1.5,1.5) -- (1.5,2) -- (0.5,2) -- (0,1.5) -- cycle [fill=blue!40, opacity=0.5];
 				\node at (.85,-.25) {$17$};
 			\end{scope}
 			\begin{scope}[yshift=-.6\textwidth]
 				\draw (0,0) -- (2,0) -- (3,1) -- (3,2) -- (1.3,2) -- (0,0.7) -- cycle [fill=blue!40, opacity=0.5];
 				\node at (1.1,-.25) {$18$};
 			\end{scope}
 		\end{tikzpicture}
 	}
 	\caption{Compact tropically geodesic 2D sets in $\RR^2$}
 	\label{fig:convexpolygons}
 \end{figure}
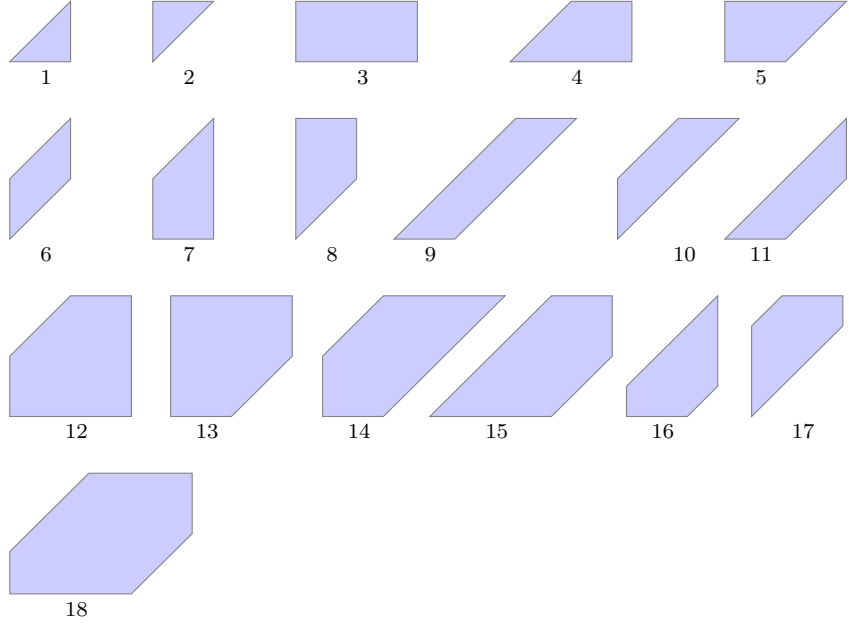

 \subsection{Compact tropically geodesic sets in $\Rn$}
 Compact tropically geodesic sets in $\Rn$ are characterized in a similar way to those in the plane, forming convex polytopes, and there are only finitely many types of them. 
\begin{theorem}
	\label{thm:trop:geod_Rn}
	Let $S$ be a non-empty compact tropically geodesic set in $\Rn$. Then $S$ is a region which is the intersection of $n(n+1)$ half-spaces of the form:
	\begin{enumerate}[label=(\roman*)]
		\item $x_i \geq a_i$ and $x_i \leq a'_i$, $a_i \leq a'_i$, for $i=1,\ldots,n$;
		\item $x_i-x_j \geq b_{ij}$, $b_{ij} \leq -b_{ji}$, for $i = 1,\ldots,n$, $j \neq i$,
	\end{enumerate}
	where the $a_i,a'_i,b_{ij} \in \RR$ define the supporting hyperplanes.
	And vice versa: every non-empty compact set $S \subset \Rn$ that satisfies the above conditions is tropically geodesic.
\end{theorem}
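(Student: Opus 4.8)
The plan is to prove the two implications separately, generalising Propositions~\ref{prop:geod_two_points} and~\ref{prop:18forms} from the three ``special'' directions of the plane to the $n(n+1)$ directions $\pm e_i$ and $\pm(e_i-e_j)$. The recurring tool is the local description of geodesics via coordinate planes: by Remark~(2) following the definition of tropical geodesic, a curve $\gamma$ from $\varx$ to $\vary$ is a geodesic exactly when, for each $i\ne j$, the projection $p_{ij}(\gamma)$ is a planar tropical geodesic between $p_{ij}(\varx)$ and $p_{ij}(\vary)$; by Proposition~\ref{prop:geod_two_points} the union of such planar geodesics is the parallelogram $\{(\xi,\eta): \xi\text{ between }x_i,y_i,\ \eta\text{ between }x_j,y_j,\ \eta-\xi\text{ between }x_j-x_i,y_j-y_i\}$. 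Write $B(\varx,\vary)\subseteq\Rn$ for the ``box'' cut out by the conditions ``$z_i$ between $x_i$ and $y_i$'' and ``$z_i-z_j$ between $x_i-x_j$ and $y_i-y_j$''. Then every geodesic from $\varx$ to $\vary$ lies in $B(\varx,\vary)$, and $B(\varx,\vary)$ is itself an intersection of half-spaces of the types (i)--(ii).

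For ``(i)--(ii) $\Rightarrow$ tropically geodesic'': let $S$ be cut out by such inequalities, take $\varx,\vary\in S$, and let $\gamma$ be a geodesic between them. Then $\gamma\subseteq B(\varx,\vary)$, and $B(\varx,\vary)\subseteq S$ because each defining inequality of $S$ is implied by the corresponding (weaker) one of $B(\varx,\vary)$ together with $\varx,\vary\in S$: for instance $z_i\ge\min(x_i,y_i)\ge a_i$ since both $x_i\ge a_i$ and $y_i\ge a_i$, and likewise for the upper bounds $x_i\le a_i'$ and for $z_i-z_j\ge b_{ij}$. Hence $\gamma\subseteq S$, so $S$ is tropically geodesic. (There are $2n$ inequalities of type (i) and $n(n-1)$ of type (ii), i.e.\ $n(n+1)$ in all, and $S\ne\emptyset$ forces $a_i\le a_i'$ and $b_{ij}\le -b_{ji}$.)

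For the converse: since the straight segment between two points is a tropical geodesic, a tropically geodesic $S$ is Euclidean-convex, and being compact it is the intersection of its supporting half-spaces; the task is to show each may be taken of the form (i) or (ii). I would argue locally at a boundary point $\varp$. For every $\vary\in S$ we have $B(\varp,\vary)\subseteq S$ (tropical geodesicity, using that $B(\varp,\vary)$ is covered by geodesics from $\varp$ to $\vary$), so near $\varp$ the set $S$ contains the union of the boxes $B(\varp,\vary)$, $\vary\in S$; each has $\varp$ among its vertices and all facet normals among $\pm e_i,\pm(e_i-e_j)$. Passing to tangent cones at $\varp$, the cone $T_S(\varp)$ is the closed convex hull of the tangent cones of these boxes at $\varp$, and one concludes — as in the plane, but now with the central hyperplane arrangement generated by the forms $x_i$ and $x_i-x_j$ in place of the three lines used for Proposition~\ref{prop:18forms} — that $T_S(\varp)$ is cut out by half-spaces with normals among $\pm e_i,\pm(e_i-e_j)$, equivalently that the normal cone $N_S(\varp)$ is generated by these vectors. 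Knowing this at every boundary point, the facet normals of $S$ (extreme rays of the normal cones at vertices) all lie in $\{\pm e_i,\pm(e_i-e_j)\}$, so $S$ is an intersection of half-spaces with these normals; grouping parallel ones yields the stated $n(n+1)$ inequalities.

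The routine ingredients are Remark~(2) with Proposition~\ref{prop:geod_two_points} and the final bookkeeping. I expect the main obstacle to be the local step of the converse: although the tangent cone of each single box $B(\varp,\vary)$ at $\varp$ is an intersection of half-spaces with special normals, $T_S(\varp)$ is the convex hull of a union of such cones, i.e.\ $N_S(\varp)$ is their intersection, and one must check this stays ``generated by $\pm e_i,\pm(e_i-e_j)$''. What makes this work is that all the cones in play are assembled from the \emph{same} finite list of $n(n+1)$ directions, so they are unions of closed cells of the central arrangement $\{e_i^\perp\}\cup\{(e_i-e_j)^\perp\}$, and the argument can be run cell by cell (for $n=2$ this arrangement has only three lines, which is why Proposition~\ref{prop:18forms} goes through by inspection). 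A slightly different route avoiding tangent cones is to first prove directly that every facet of $S$ has a special normal: a supporting hyperplane $H$ with a non-special normal would, choosing two generic points $\varx,\vary$ in the relative interior of the facet it determines, yield a full-dimensional box $B(\varx,\vary)\subseteq S$ having $\varx$ and $\vary$ as \emph{antipodal} vertices, which forces the normal of $H$ into $N_{B(\varx,\vary)}(\varx)\cap N_{B(\varx,\vary)}(\vary) = N_{B(\varx,\vary)}(\varx)\cap\bigl(-N_{B(\varx,\vary)}(\varx)\bigr)=\{\Zero\}$ since that normal cone is pointed and full-dimensional; one then rules out non-polyhedral pieces of $\partial S$ using that a full-dimensional box of this type lies inside $S$ through every boundary point.
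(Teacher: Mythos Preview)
Your proposal is essentially correct, and the key ideas (the box $B(\varx,\vary)=g_{\mathrm{tr}}(\{\varx,\vary\})$, Euclidean convexity of $S$, and the antipodal-vertex contradiction for a non-special facet normal) are sound. The subtle point you flag in the tangent-cone route can indeed be resolved: for any boundary point $\varp$ and any $\vary$ in the interior of $S$ with $\vary-\varp$ generic, the tangent cone of $B(\varp,\vary)$ at $\varp$ is exactly the closed Weyl chamber of the type-$A_n$ arrangement containing $\vary-\varp$; since any chamber whose interior meets $T_S(\varp)=\overline{\mathrm{cone}}(S-\varp)$ arises this way, $T_S(\varp)$ is a union of closed chambers, hence bounded by root hyperplanes, and $N_S(\varp)$ is generated by the special vectors. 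Your alternative facet-normal argument is also valid once this shows $S$ is polyhedral.

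However, your route is genuinely different from, and considerably heavier than, the paper's. The paper bypasses the local cone analysis entirely by \emph{projecting to coordinate planes}: it observes that a tropical piecewise-linear geodesic in $\Rn$ projects under each $p_{ij}$ to a planar tropical geodesic, and conversely every planar geodesic between $p_{ij}(\varx)$ and $p_{ij}(\vary)$ lifts to one in $\Rn$ between $\varx$ and $\vary$. Hence each $p_{ij}(S)$ is itself a compact tropically geodesic subset of $\RR^2$, and Proposition~\ref{prop:18forms} already gives it the form \eqref{eq:18forms}. The theorem then follows by pulling these planar constraints back. What the paper's approach buys is brevity and a clean reduction to the $n=2$ case already worked out; what yours buys is a self-contained argument that does not rely on the lifting claim for geodesics and makes the polyhedral structure of $S$ explicit through the type-$A_n$ fan. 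Both the paper's step ``since $S$ is Euclidean convex it has the stated form'' and your normal-cone intersection step require a moment's thought to make fully rigorous; neither is a genuine gap.
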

\begin{proof}
	First, note that (ii) is equivalent to the non-emptiness of the intersection of the half-spaces $x_i-x_j \geq b_{ij}$ and $x_j-x_i \geq b_{ji}$, for all $i \neq j$.
	
	Let $\varx, \vary$ be two points in $S$ and let $\gamma$ be a tropical piecewise linear geodesic between $\varx$ and $\vary$. Then the orthogonal projection $p_{ij}(\gamma)$ to the plane $X_iX_j$, $i \neq j$, is a tropical piecewise linear geodesic between $p_{ij}(\varx)$ and $p_{ij}(\vary)$. And vice versa: every tropical piecewise linear geodesic between $p_{ij}(\varx)$ and $p_{ij}(\vary)$ lifts to a tropical piecewise linear geodesic between $\varx$ and $\vary$.
	
	It follows, by Proposition~\ref{prop:18forms}, that $p_{ij}(S)$, is a tropically geodesic region of the form of \eqref{eq:18forms}.
	Since $S$ is a (Euclidean) convex set then it has the form of intersection of half-spaces as described in the statement of the proposition.
	
	In the other direction, let $S \subset \Rn$ be a non-empty compact set that is the intersection of half-spaces of the forms described in (i) and (ii). Then these conditions are exactly the ones that guarantee that any tropical piecewise linear geodesic between two points $\varx, \vary \in S$ stays in $S$, implying that $S$ is tropically geodesic by definition. 
\end{proof} 
\begin{remark}
		If $S$ is tropically geodesic but not compact then a more general version of Theorem~\ref{thm:trop:geod_Rn} holds: $S$ may be bounded by only some of the half-spaces (e.g., when $x_i$ is not bounded from above) or some inequalities that involve parts of the bounding hyperplanes may be strict inequalities.  
\end{remark}
 
\section{The tropical unit ball}
\label{sec:ball}
In this section we describe the tropical (closed) unit ball in $\Rn$ and its basic properties (see also \cite{CJS22}). 
\begin{definition}
	The $n$-dimensional {\it tropical (closed) ball} of radius $R$ with center at $\varp \in \Rn$ is
	\begin{equation}
		\BRn(\varp) := \{\varx \in {\Rn} : d_{\mathrm{tr}}(\varp, \varx) \leq R \}.
	\end{equation}
\end{definition}
We denote by $\BRn$ the tropical ball of radius $R$ that is centered at the origin and when in addition the radius is 1 then it is denoted by $\Bn$.

The tropical ball is a polytope which is a standard convex set as well as a tropically convex and a tropically geodesic set.
When centered at the origin, it is centrally symmetric but not a regular polytope: it is stretched along the directions $\bf 1$ and $-\bf 1$ and shrunk in the orthogonal directions of the hyperplane $x_1 + \cdots + x_n = 0$
(see Figure~\ref{fig:tr_balls}). The two points on the boundary of $\Bn$ that are furthest from the center are $\bf 1$ and $-\bf 1$, which are at (Euclidean) distance $\sqrt{n}$ from the center (so that, the diameter of $\Bn$ is $2\sqrt{n}$), tending to $\infty$ as $n \to \infty$, whereas the closest points are those with one coordinate $\half$, one coordinate $-\half$ and the other coordinates $0$, being at distance $\frac{\sqrt{2}}{2}$ from the center.
\begin{observation}
	\label{obs:ball_conditions}
	$\Bn$ consists of the set of points $\varx = (x_1, \ldots, x_n)$ that satisfy these two conditions:
	\begin{enumerate}[label=(\roman*)]
		\item $\displaystyle{\max_i \{ \lvert x_i \rvert \} \leq 1}$;
		\item $\displaystyle{\max_i \{x_i\} - \min_j \{x_j\} \leq 1}$.
	\end{enumerate}
	Equivalently, fixing a dummy variable $x_0=0$, then
	\begin{equation}
		\label{eq:tr_unit_ball}
		\Bn = \{(x_1, \ldots,  x_n) : \vert x_i-x_j \vert \leq 1, \, 0 \leq i,j \leq n \} \}.
	\end{equation}
\end{observation}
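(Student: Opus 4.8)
\textbf{Proof plan for Observation~\ref{obs:ball_conditions}.}
The plan is to unwind the definition of the tropical ball $\Bn = \{\varx : d_{\mathrm{tr}}(\Zero,\varx) \le 1\}$ using the two-line formula for $d_{\mathrm{tr}}$ given just after Definition~\ref{def:dist}, and then to reconcile that with the $(n{+}1)$-coordinate description \eqref{eq:tr_unit_ball}. First I would specialize the formula
$$d_{\mathrm{tr}}(\varx, \vary) = \max\{\max_i \{x_i - y_i\},0\} - \min\{\min_i\{x_i - y_i\}, 0\}$$
to $\vary = \Zero$, obtaining $d_{\mathrm{tr}}(\Zero,\varx) = \max\{\max_i x_i, 0\} - \min\{\min_i x_i, 0\}$. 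The condition $d_{\mathrm{tr}}(\Zero,\varx)\le 1$ then splits, via a short case analysis on the signs of $\max_i x_i$ and $\min_i x_i$, into exactly the two inequalities of the Observation: the term $\max\{\max_i x_i, 0\}$ contributes an upper bound on each positive coordinate (part of (i)), the term $-\min\{\min_i x_i,0\}$ contributes a lower bound $-1$ on each negative coordinate (the rest of (i)), and when both a positive and a negative coordinate are present the sum of the two terms is precisely $\max_i x_i - \min_j x_j$, giving (ii). I should also check the degenerate cases (all $x_i \ge 0$, all $x_i \le 0$) to confirm that (ii) is then automatically implied by (i), so no constraint is lost.

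For the equivalence with \eqref{eq:tr_unit_ball}, I would introduce $x_0 = 0$ and observe that $\{\,|x_i - x_j| \le 1 : 0 \le i,j \le n\,\}$ is the same as requiring $x_i - x_j \le 1$ for all ordered pairs $0\le i,j\le n$, which says $\max_{0\le i\le n} x_i - \min_{0\le j \le n} x_j \le 1$. Splitting off the index $0$: $\max_{0\le i\le n} x_i = \max\{\max_{1\le i\le n} x_i,\, 0\}$ and $\min_{0\le j\le n} x_j = \min\{\min_{1\le j\le n} x_j,\, 0\}$, so this single inequality reads exactly $\max\{\max_i x_i,0\} - \min\{\min_j x_j,0\}\le 1$, i.e.\ $d_{\mathrm{tr}}(\Zero,\varx)\le 1$. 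This is in fact the cleanest route — one could even derive (i) and (ii) as a corollary of \eqref{eq:tr_unit_ball} by separating out the pairs involving $0$ (giving (i)) from the pairs with $i,j\ge1$ (giving (ii)), rather than going through the explicit distance formula twice.

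There is essentially no obstacle here; the only thing requiring a little care is bookkeeping the $\max$/$\min$-with-$0$ clamps correctly in the sign case analysis, and noting that condition (ii) becomes vacuous (redundant) exactly when $\varx$ lies in the nonnegative or nonpositive orthant, so that the equivalence of the two formulations is genuine and not an accidental coincidence of a special case. I would present the $(n{+}1)$-coordinate argument as the main line and mention the direct case analysis as an alternative, since the former makes the symmetry in the $n{+}1$ tropical coordinates (used throughout the rest of Section~\ref{sec:ball}) manifest.
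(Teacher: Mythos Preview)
Your proposal is correct and complete. The paper itself offers no proof for this statement---it is labeled an Observation and is presented as immediate from the definition of $d_{\mathrm{tr}}$ and $\tnorm{\cdot}$---so your unwinding of the formula $\tnorm{\varx} = \max\{\max_i x_i,0\} - \min\{\min_i x_i,0\}$ and the clean $(n{+}1)$-coordinate reformulation via $x_0=0$ is exactly the justification the paper leaves implicit.
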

\noindent
{\it Vertices:} The $2^{n+1}-2$ vertices of $\Bn$ are:
\begin{enumerate}[label=(\roman*)]
	\item $\{(x_1, \ldots, x_n) \in \{0,1\}^n \} \setminus\{\Zero\}$;
	\item $\{(x_1, \ldots, x_n) \in \{0,-1\}^n \} \setminus\{\Zero\}$.
\end{enumerate}
Here $\Zero = (0, \ldots, 0)$. \\
{\it Hyperplanes:} $\Bn$ is supported by the following $n(n+1)$ hyperplanes (with a fixed $x_{0} = 0$ ): \\
\indent\hspace{2mm} $x_i-x_j = 1$, for $i,j = 0,\ldots,n, i \neq j$. \\
{\it Facets:} The $n(n+1)$ facets (of dimension $n-1$) of $\Bn$ are:
\begin{enumerate}[label=(\roman*)]
	\item $F_i = \{(x_1, \ldots, x_n) : x_i =  1, 0 \leq x_j \leq 1, j \neq i\}$, for $i=1,\ldots,n$;
	\item $F_{-i}=\{(x_1, \ldots, x_n) : x_i =  -1, -1 \leq x_j \leq 0, j \neq i\}$, for $i=1,\ldots,n$;
	\item $F_{ij}=\{(x_1, \ldots, x_n) : 0 \leq x_i \leq 1, x_j = x_i-1, x_j \leq x_k \leq x_i,  k=1,\ldots,n, k \neq i,j\}$, for $i,j=1,\ldots,n, i \neq j$ (when $n>1$).
\end{enumerate}
We have, $F_{-i}=-F_i$ and $F_{ji}=-F_{ij}$.
The union of the above facets is the {\it tropical unit sphere} $\San$.

\begin{figure}[h]
	\centering
	\begin{tikzpicture}
		\begin{axis}[
			axis x line=middle,
			axis y line=middle,
			grid = major,
			width=7cm,
			height=7 cm,
			grid style={dashed, gray!80},
			xmin=-2.0,     
			xmax= 2.0,    
			ymin= -2.0,     
			ymax= 2.0,   
			xlabel=$x$,
			ylabel=$y$,
			/pgfplots/xtick={-1.0, 0.0, 1.0}, 
			/pgfplots/ytick={-1.0, 0.0, 1.0}, 
			]
			\draw[thick,black] (1.0,0.0) -- (1.0,1.0) -- (0.0,1.0) -- (-1.0,0.0) -- (-1.0,-1.0) -- (0.0,-1.0) -- cycle [fill=blue!20, opacity=0.3];
			\draw[thick,black] (1.0,0.0) -- (1.0,1.0) -- (0.0,1.0) -- (-1.0,0.0) -- (-1.0,-1.0) -- (0.0,-1.0) -- cycle;
		\end{axis}
	\end{tikzpicture}
	\qquad
	\begin{tikzpicture}
		\begin{axis}[
			view={50}{10},
			axis x line=middle,
			axis y line=middle,
			axis z line=middle,
			grid = major,
			width=9cm,
			height=9cm,
			grid style={dashed, gray !80},
			xmin=-2,     
			xmax= 2,    
			ymin= -2,     
			ymax= 2,   
			zmin= -2,     
			zmax= 2,   
			xlabel=$x$,
			ylabel=$y$,
			zlabel=$z$,
			/pgfplots/xtick={-1.0, 0.0, 1.0}, 
			/pgfplots/ytick={-1.0, 0.0, 1.0}, 
			/pgfplots/ztick={-1.0, 0.0, 1.0}, 
			]
			\coordinate (A1) at (1,0,0);
			\coordinate (A2) at (1,1,0);
			\coordinate (A3) at (0,1,0);
			\coordinate (A4) at (0,0,1);
			\coordinate (A5) at (1,0,1);
			\coordinate (A6) at (1,1,1);
			\coordinate (A7) at (0,1,1);

			\coordinate (B1) at (-1,0,0);
			\coordinate (B2) at (-1,-1,0);
			\coordinate (B3) at (0,-1,0);
			\coordinate (B4) at (0,0,-1);
			\coordinate (B5) at (-1,0,-1);
			\coordinate (B6) at (-1,-1,-1);
			\coordinate (B7) at (0,-1,-1);

			\draw [thick] (A1) -- (A2) -- (A6) -- (A5) -- cycle;
			\draw (A3) -- (A2) -- (A6) -- (A7) -- cycle;
			\draw [thick] (A4) -- (A5) -- (A6) -- (A7) -- cycle;
			\draw (B1) -- (B2) -- (B6) -- (B5) -- cycle;
			\draw [thick] (B3) -- (B2) -- (B6) -- (B7) -- cycle;
			\draw (B4) -- (B5) -- (B6) -- (B7) -- cycle;
			\draw (A7) -- (B1);
			\draw [thick] (A4) -- (B2);
			\draw (A3) -- (B5);
			\draw [thick] (A5) -- (B3);
			\draw (A2) -- (B4);
			\draw [thick] (A1) -- (B7);
			\draw [thick] (B7) -- (B4) -- (A2);
			\draw [thick] (A2) -- (B4) -- (B7) -- (B6) -- (B2) -- (A4) -- (A7) -- (A6) -- cycle [fill=blue!20, opacity=0.3];
		\end{axis}
	\end{tikzpicture}
	\caption{The tropical unit ball in the plane (a hexagon, left) and in space (a rhombic dodecahedron with four-sided faces, right)}
	\label{fig:tr_balls}
\end{figure}

In a standard $n$-dimensional ball of radius $R$, two points $\varp$ and $\varq$ are of distance $2R$ from each other if and only if they are antipodal points on the surface of the ball, so that the line segment joining them passes through the center of the ball.
In a tropical $n$-dimensional ball $\BRn$ of radius $R$, two points $\varp$ and $\varq$ are of tropical distance $2R$ from each other if and only if there is a tropical geodesic joining them that passes through the center of $\BRn$, and this happens if and only if $\varp$ and $\varq$ belong to opposite facets of $\BRn$: $\varp \in F_{i}$ and $\varq \in F_{-i}$ or $\varp \in F_{ij}$ and $\varq \in F_{ji}$. It means that, when $n>1$, given a point $\varp \in \partial \BRn$, there are infinitely many points $\varq \in \partial \BRn$, such that $d_{\mathrm{tr}}(\varp, \varq) = 2R$, the tropical diameter of $\BRn$.

\subsection{Tropical coordinate system}
\label{subsec:coord}
Let the {\it (min) tropical standard unit vectors} be ${\bf \tilde{e}_1} = (1,0,\ldots,0), \ldots, {\bf \tilde{e}_n} = (0,\ldots,0,1)$ and ${\bf \tilde{e}_{n+1}} = -\One =(-1,\ldots,-1)$.
Then we define the {\it (min) tropical coordinate system} of $\Rn$ with respect to the axes in the $n+1$ directions of these unit vectors.
$\Rn$ is then decomposed into $n+1$ orthants $\Rn_j$, for $j=1,\ldots, n+1$, where $\Rn_j$ is the positive cone determined by the $n$ vectors ${\bf \tilde{e}_1},\ldots,{\bf \widehat{\tilde{e}}_j},\ldots,{\bf \tilde{e}_{n+1}}$.
\begin{remark}
		In the max setting the coordinate axes and the positive cones are at directions opposite to the ones in the min setting.
\end{remark}
In Section~\ref{sec:proj} we (arbitrarily) chose to map the element $(x_1 : x_2 : \ldots : x_{n+1}) \in \Projn$ to the element $(x_1-x_{n+1}, x_2-x_{n+1}, \ldots, x_n-x_{n+1}) \in \Rn$. When expressing this element with respect to the tropical coordinate system (written in square brackets) we have:
\begin{eqnarray*}
	&&(x_1-x_{n+1}, x_2-x_{n+1}, \ldots, x_n-x_{n+1}) = \\
	&&[x_1-x_{n+1}, x_2-x_{n+1}, \ldots, x_n-x_{n+1},0] = \\
	&&[x_1, x_2, \ldots, x_{n+1}],
\end{eqnarray*}
where the last equality follows from the non-uniqueness of this representation:
$[a_1, \ldots, a_{n+1}] = [a_1+a, \ldots, a_{n+1}+a]$ for all $a \in \RR$.
It follows that we can choose a representative of each such equivalence class in which (at least) one coordinate is zero and the other coordinates are non-negative, and then we have a unique representation.
In this case, when  the $j$-th entry is zero, we use the following notation:
\begin{equation}
	\label{eq:j-coord}
[x_1, \ldots, x_{j-1}, 0, x_{j+1}, \ldots  x_{n+1}] = (x_1, \ldots, x_{j-1}, x_{j+1}, \ldots  x_{n+1})_j,
\end{equation}
where all the $x_i$ are non-negative. If more than one entry is zero then we can choose which one to omit.

\subsection{Minkowski sum}
One way of obtaining $\Bn$ is by taking the positive unit hypercube $I^n$, defined by the $n$ standard unit vectors ${\bf \tilde{e}_1}, \ldots, {\bf \tilde{e}_n}$, and moving it in direction $-\One$ until it coincides with the negative unit hypercube, defined by $-{\bf \tilde{e}_1}, \ldots, -{\bf \tilde{e}_n}$. 
That is, $\Bn$ is the parallelotope defined by ${\bf \tilde{e}_1}, \ldots, {\bf \tilde{e}_{n+1}}$.
This can be written in terms of Minkowski sum.
\begin{definition}
	The {\it Minkowski sum} of two sets $A$ and $B$ of position vectors is
	$U+V := \{{\bf u}+{\bf v} : {\bf u} \in U, {\bf v} \in V\}$.
\end{definition}
Let $I_{{\bf \tilde{e}_1}}, \ldots, I_{{\bf \tilde{e}_{n+1}}}$ be the tropical unit segments in  directions ${\bf \tilde{e}_1}, \ldots, {\bf \tilde{e}_{n+1}}$.
We show that $\Bn$ is the Minkowski sum of these segments, that is, $\Bn$ is a {\it zonotope}.
\begin{proposition}
	\label{prop:Minkowski}
	$\Bn = I_{{\bf \tilde{e}_1}} + \cdots + I_{{\bf \tilde{e}_{n+1}}}$.
\begin{proof}
	In fact, the validity of this statement follows from the above description of the facets of the tropical unit ball.
	Let us see it directly from the definition of $\Bn$. Clearly, the set of points of $\Bn$ with non-negative entries is exactly the positive unit hypercube $I^n$. We can define $I^n$ by
	$$
	I^n := \{ (x_1,\ldots,x_n) \in \Rn : 0 \leq x_i \leq 1 \; \wedge \; \vert x_i - x_j \vert \leq 1, \text{ for all } i,j \},
	$$
	where the second set of inequalities is implied by the first ones.
	It is known, and easy to see, that $I^n = I_{{\bf \tilde{e}}_1} + \cdots + I_{{\bf \tilde{e}}_{n}}$.
	But then
	$$
	I^n + I_{{\bf \tilde{e}_{n+1}}} = \{ (x_1,\ldots,x_n) \in \Rn : \vert x_i \vert \leq 1 \; \wedge \; \vert x_i - x_j \vert \leq 1, \text{ for all } i,j \},
	$$
	showing that $I_{{\bf \tilde{e}_1}} + \cdots + I_{{\bf \tilde{e}_{n+1}}} = \Bn$.
%
	%
%
\end{proof}
\end{proposition}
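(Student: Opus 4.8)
The plan is to work directly from the inequality description of $\Bn$ in Observation~\ref{obs:ball_conditions} --- in the form~\eqref{eq:tr_unit_ball}, $\Bn = \{(x_1,\ldots,x_n) : \vert x_i-x_j\vert \le 1,\ 0 \le i,j \le n\}$ with the dummy variable $x_0 = 0$ --- and to add the $n+1$ segments to the Minkowski sum one at a time: the first $n$ produce a cube, and the last one stretches it along $-\One$ into the ball.

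First I would note that $I_{{\bf \tilde{e}}_1} + \cdots + I_{{\bf \tilde{e}}_n} = I^n$, the standard unit cube $[0,1]^n$: since $I_{{\bf \tilde{e}}_i} = \{t\,{\bf \tilde{e}}_i : t \in [0,1]\}$ for $i \le n$, a point of the sum is $(t_1,\ldots,t_n)$ with each $t_i \in [0,1]$, and conversely. (Incidentally this identifies $I^n$ with the part of $\Bn$ lying in the non-negative orthant, linking the statement to the tropical coordinate decomposition of the previous subsection, although that remark is not logically needed.)

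The substance of the argument is the final Minkowski sum $I^n + I_{{\bf \tilde{e}}_{n+1}}$, where $I_{{\bf \tilde{e}}_{n+1}} = \{-s\One : s \in [0,1]\}$. A point of this set has the form $\varx = \vary - s\One$ with $\vary \in [0,1]^n$ and $s \in [0,1]$, i.e.\ $x_i = y_i - s$. For the inclusion ``$\subseteq$'': then $x_i = y_i - s \in [-1,1]$ and $x_i - x_j = y_i - y_j \in [-1,1]$ for $i,j \ge 1$, so, using also $x_0 = 0$, every difference $x_i - x_j$ with $0 \le i,j \le n$ lies in $[-1,1]$ and $\varx$ satisfies~\eqref{eq:tr_unit_ball}. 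For ``$\supseteq$'': given $\varx \in \Bn$, I must produce $s \in [0,1]$ with $x_i + s \in [0,1]$ for every $i = 1,\ldots,n$, which forces $\max\{0,\max_i(-x_i)\} \le s \le \min\{1, 1-\max_i x_i\}$; writing $M = \max_{0 \le i \le n} x_i \ge 0$ and $m = \min_{0 \le i \le n} x_i \le 0$, the lower bound equals $-m$ and the upper bound equals $1-M$, so such $s$ exists precisely when $M - m \le 1$ --- which is immediate from~\eqref{eq:tr_unit_ball} (take $i,j$ attaining $M,m$). Putting $y_i := x_i + s$ then exhibits $\varx$ in $I^n + I_{{\bf \tilde{e}}_{n+1}}$. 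Together with the first step this gives $\Bn = I_{{\bf \tilde{e}}_1} + \cdots + I_{{\bf \tilde{e}}_{n+1}}$.

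I do not expect a real obstacle here; the only point requiring a moment's care is the existence of the scalar $s$ in the reverse inclusion, i.e.\ checking that the lower bound $-m$ does not exceed the upper bound $1-M$, which is exactly where the defining constraint of the ball gets consumed. A more conceptual alternative would be to invoke the standard facet description of a zonotope --- each facet of $\sum_i I_{{\bf \tilde{e}}_i}$ is the Minkowski sum of all but one of the generating segments --- and to match the resulting faces with the $n(n+1)$ facets $F_i, F_{-i}, F_{ij}$ of $\Bn$ listed above; but the direct computation is shorter and self-contained.
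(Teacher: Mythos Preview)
Your proof is correct and follows essentially the same route as the paper: first identify $I_{{\bf \tilde{e}}_1}+\cdots+I_{{\bf \tilde{e}}_n}$ with the cube $I^n=[0,1]^n$, then add the last segment $I_{{\bf \tilde{e}}_{n+1}}=\{-s\One:s\in[0,1]\}$ and match the result with the inequality description~\eqref{eq:tr_unit_ball} of $\Bn$. The paper simply asserts the final equality $I^n+I_{{\bf \tilde{e}}_{n+1}}=\Bn$, whereas you spell out both inclusions, in particular the existence of the shift $s$ via $-m\le s\le 1-M$; that is a welcome extra line of justification rather than a different argument.
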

By Proposition~\ref{prop:Minkowski}, we have another way to describe $\Bn$:
\begin{equation}
	\Bn = \{ \varx = \sum_{i=1}^{n+1} a_i {\bf \tilde{e}_i} : 0 \leq a_i  \leq 1, \text{ for all } i \}.
\end{equation}

\subsection{The tropical angle between two (standard) lines}
\begin{definition}
	Let $\San(\varp)$ be the tropical unit sphere with center at $\varp$ and let $\varx,\vary \in \San(\varp)$. Then the intrinsic distance between $\varx$ and $\vary$ on $\San(\varp)$ is
	$$
		d^{\circ}_{\mathrm{tr}}(\varx, \vary) := \inf \{ l_{\mathrm{tr}}(\gamma) : \gamma \subset \San(\varp), \gamma(0)=\varx, \gamma(1)=\vary \}.
	$$
\end{definition}

\begin{definition}
	Let $l_1,l_2 \subset \Rn$, $n \geq 2$, be two (standard) lines that meet at a common point $\varp$. Let $\San(\varp)$ be the tropical unit sphere with center at $\varp$ and let $\bf q_{11}, q_{12}$ and $\bf q_{21}, q_{22}$ be the points where $l_1$ and $l_2$ intersect $\San(\varp)$. Then we define the {\it tropical angle} (in radians) between $l_1$ and $l_2$ to be
	$$
		\angle_{\mathrm{tr}}(l_1,l_2) := \min_{j,k} d^{\circ}_{\mathrm{tr}}({\bf q_{1j}, q_{2k}}).
	$$
\end{definition}
\begin{remark}
	The above definition of a tropical angle is different from the definition given in \cite{ABGJ14}, in which the tropical angle is either $\frac{\pi}{2}$ or $0$.
\end{remark}

The circumference of the tropical unit circle is 6 (tropical) radians and, as expected, we have the following result.
\begin{proposition}
	Let $l_1,l_2 \subset \Rn$ be two intersecting (standard) lines. Then
	$$
	0 \leq \angle_{\mathrm{tr}}(l_1,l_2) \leq 3.
	$$
\end{proposition}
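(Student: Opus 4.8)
The lower bound is immediate: $\angle_{\mathrm{tr}}(l_1,l_2)$ is an infimum of tropical lengths of curves, hence non-negative (it is $0$ exactly when some ${\bf q_{1j}}={\bf q_{2k}}$, e.g. when $l_1=l_2$). So the whole content is the upper bound $\angle_{\mathrm{tr}}(l_1,l_2)\le 3$, and I may assume $l_1\neq l_2$.

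My plan is to put all four intersection points onto one closed convex curve lying on the tropical unit sphere and then exploit central symmetry. Let $\Pi$ be the unique $2$-plane containing $\varp$, $l_1$ and $l_2$, let $\Pi_0$ be its direction plane, and set $K:=\Pi\cap\Bn(\varp)$ and $C:=\partial K$. Since $\varp$ is interior to $\Bn(\varp)$, $K$ is a $2$-dimensional convex body and $C=\Pi\cap\San(\varp)$ is a closed convex curve on $\San(\varp)$ through ${\bf q_{11}},{\bf q_{12}},{\bf q_{21}},{\bf q_{22}}$. Two facts then do the work. First, $d_{\mathrm{tr}}(\varu,\varv)=\tnorm{\varu-\varv}$ and the restriction $\phi:=\tnorm{\cdot}|_{\Pi_0}$ is a genuine norm on the $2$-dimensional space $\Pi_0$ (no nonzero vector of $\Rn$ has zero tropical norm); consequently $K$ is a translate of the unit disk of $\phi$, the tropical length of any curve inside $\Pi$ equals its length in the normed plane $(\Pi_0,\phi)$, and $L:=\ltr{C}$ is precisely the circumference of the unit circle of $(\Pi_0,\phi)$. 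Second, $\varx\mapsto 2\varp-\varx$ is a tropical-length-preserving involution of $C$ without fixed point, so after reparametrising $C$ by tropical arc length $t\in[0,L)$ it acts as $t\mapsto t+L/2\pmod L$. Thus if ${\bf q_{11}}$ is at parameter $0$ then ${\bf q_{12}}$ is at $L/2$, and if ${\bf q_{21}}$ is at parameter $s$ then ${\bf q_{22}}$ is at $s+L/2\pmod L$; after possibly swapping ${\bf q_{21}}\leftrightarrow{\bf q_{22}}$ and/or ${\bf q_{11}}\leftrightarrow{\bf q_{12}}$ I may assume $0\le s\le L/2$.

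Now any sub-arc of $C$ is an admissible curve on $\San(\varp)$, so $d^{\circ}_{\mathrm{tr}}({\bf q_{11}},{\bf q_{21}})\le\min(s,L-s)=s$ and $d^{\circ}_{\mathrm{tr}}({\bf q_{11}},{\bf q_{22}})\le\min(s+\tfrac{L}{2},\tfrac{L}{2}-s)=\tfrac{L}{2}-s$, whence
\[
\angle_{\mathrm{tr}}(l_1,l_2)\ \le\ \min\big(d^{\circ}_{\mathrm{tr}}({\bf q_{11}},{\bf q_{21}}),\, d^{\circ}_{\mathrm{tr}}({\bf q_{11}},{\bf q_{22}})\big)\ \le\ \min\big(s,\tfrac{L}{2}-s\big)\ \le\ \tfrac{L}{4}.
\]
Finally, $L$ is the circumference of the unit circle of a $2$-dimensional normed space, and it is classical that this is at most $8$ (with equality only for a parallelogram disk); therefore $\angle_{\mathrm{tr}}(l_1,l_2)\le L/4\le 2\le 3$, which is even slightly sharper than claimed.

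The step I expect to require the most care is the first bullet of the setup: checking rigorously that a planar slice through $\varp$ of the tropical metric is again a normed plane and that tropical arc length inside the slice is the induced norm's arc length (routine, but worth spelling out), and then invoking the $\le 8$ bound in the polyhedral case. I would also flag that $L$ is genuinely not always $6$: for $n\ge 4$ the plane spanned by ${\bf \tilde{e}_1}-{\bf \tilde{e}_2}$ and ${\bf \tilde{e}_3}-{\bf \tilde{e}_4}$ meets $\Bn$ in a square of tropical perimeter $8$, so it is the factor $\tfrac14$ coming from central symmetry — not merely a factor $\tfrac12$ — that makes the argument succeed; for $n=2$ one has $L=6$ and recovers $\angle_{\mathrm{tr}}\le\tfrac{3}{2}$.
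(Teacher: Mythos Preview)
Your argument is correct and takes a genuinely different route from the paper. The paper never looks at the $2$-plane spanned by $l_1,l_2$; instead it works globally on $\San$, building explicit shortest paths from an arbitrary point $\varx\in\San$ to the two distinguished vertices $\One$ and $-\One$ and showing the identity $d^{\circ}_{\mathrm{tr}}(\varx,\One)+d^{\circ}_{\mathrm{tr}}(\varx,-\One)=3$. Summing this for $\varx$ and $\vary$ and applying the triangle inequality through $\One$ and through $-\One$ gives $d^{\circ}_{\mathrm{tr}}(\varx,\vary)\le 3$ for \emph{all} pairs on the sphere, so in particular for the $q_{ij}$. Your approach instead observes that $d_{\mathrm{tr}}$ is a genuine norm metric on $\Rn$, slices by the plane $\Pi$ to reduce to a $2$-dimensional Minkowski plane, and then invokes Go{\l}{\k a}b's classical bound $L\le 8$ together with the central-symmetry quartering to obtain $\angle_{\mathrm{tr}}(l_1,l_2)\le L/4\le 2$. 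What you gain is a sharper constant and an argument that uses almost nothing specific to the tropical norm (any centrally symmetric norm would do); what the paper's computation gains is the exact intrinsic diameter $d^{\circ}_{\mathrm{tr}}(\One,-\One)=3$ of the tropical sphere, a statement your planar-section method does not see. The steps you flagged as needing care (that $\tnorm{\cdot}$ restricts to a norm on $\Pi_0$, that tropical arc length inside $\Pi$ equals $\phi$-length, and that $C=\partial(\Pi\cap\Bn(\varp))\subset\San(\varp)$) are all routine, and your example for $n\ge 4$ showing $L=8$ correctly explains why the factor $\tfrac14$, not $\tfrac12$, is essential.
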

\begin{proof}
	By the above definition of a tropical angle, we need to show that, for every $\varx,\vary \in \San$, we have $0 \leq d^{\circ}_{\mathrm{tr}}(\varx,\vary) \leq 3$, where the left inequality is clear. First, we show that when $\varx = (x_1,\ldots,x_n) \in \San$ then $d^{\circ}_{\mathrm{tr}}(\varx,\One) + d^{\circ}_{\mathrm{tr}}(\varx,-\One) =3$.
	
	Case (i): $\varx$ is on a facet of the form $x_i=1$, for some $1 \leq i \leq n$
	(similarly, for a facet $x_i=-1$). Assume, w.l.o.g., that $i=n$ and $0 \leq x_1 \leq x_2 \leq \cdots \leq x_{n-1} \leq x_n = 1$. We have $d^{\circ}_{\mathrm{tr}}(\varx,\One) = d_{\mathrm{tr}}(\varx,\One) = 1- x_1$ via a tropical geodesic on the facet $x_n=1$. Then a shortest path from $\varx$ to $-\One$ starts on the facet $x_n=1$ in direction $(-x_1,-x_1,\ldots,-x_1,0)$ and reaches the point $(0,x_2-x_1,\ldots,x_{n-1}-x_1,1) \in (x_n=1) \cap (x_n-x_1=1)$ on a path of tropical length $x_1$. The next line segment towards $-\One$ of shortest tropical length is in direction $-\One$ on the facet $(x_n-x_1=1)$, reaching the point $\vary = (-1,x_2-x_1-1,\ldots,x_{n-1}-x_1-1,0) \in (x_n-x_1=1) \cap (x_1=-1)$ and is of tropical length $1$. Finally, a tropical geodesic on the facet $x_1=-1$ reaches $-\One$ with $d^{\circ}_{\mathrm{tr}}(\vary,-\One) = d_{\mathrm{tr}}(\vary,-\One) = 1$.
	We have $d^{\circ}_{\mathrm{tr}}(\varx,-\One) =x_1+1+1=2+x_1$ and $d^{\circ}_{\mathrm{tr}}(\varx,\One) + d^{\circ}_{\mathrm{tr}}(\varx,-\One) =(1-x_1)+(2+x_1)=3$.
	
	Case (ii): $\varx$ lies on a facet of the form $x_i-x_j=1$, $0<x_i<1$, for some $i \neq j$. Assume, w.l.o.g., that $0 > x_1 \leq x_2 \leq \cdots \leq x_{n-1} \leq x_n =  x_1+1 >0$. Then a path towards $\One$ that starts at the facet of $\varx$ in direction $\One$ is of shortest tropical length $-x_1$, reaching the point $\vary = (0,x_2-x_1-1,\ldots,x_{n-1}-x_1,1)$. Then $d^{\circ}_{\mathrm{tr}}(\vary,\One) = d_{\mathrm{tr}}(\vary,\One) = 1$ and  $d^{\circ}_{\mathrm{tr}}(\varx,\One) = 1-x_1$. A similar computation gives $d^{\circ}_{\mathrm{tr}} (\varx,-\One) = 2+x_1$ and overall, $d^{\circ}_{\mathrm{tr}}(\varx,\One) + d^{\circ}_{\mathrm{tr}}(\varx,-\One) =(1-x_1)+(2+x_1)=3$.
	
	From the above, it follows that $d^{\circ}_{\mathrm{tr}}(\One, -\One) = 3-0=3$, and for arbitrary $\varx,\vary \in \San$, $(d^{\circ}_{\mathrm{tr}}(\varx,\One) + d^{\circ}_{\mathrm{tr}}(\One,\vary)) + (d^{\circ}_{\mathrm{tr}}(\varx,-\One) + d^{\circ}_{\mathrm{tr}}(-\One,\vary)) = 6$, implying that $0 \leq d^{\circ}_{\mathrm{tr}}(\varx,\vary) \leq 3$.
\end{proof}
	We say that the $n+1$ tropical unit vectors ${\bf \tilde{e}_1}, \ldots, {\bf \tilde{e}_{n+1}}$ in the directions of the tropical coordinate axes in $\Rn$, $n \geq 2$, are pairwise {\it tropically orthogonal}.
	Note that the tropical angle between any two of these vectors is 2 radians, which is a third of a full tropical circle, in contrast to standard orthogonal vectors, which form an angle of $\frac{\pi}{2}$ radians, a quarter of a full circle.
	Of course, this is not the only possible definition of tropical orthogonality. A natural choice is having a tropical scalar product $\infty$, the neutral element for tropical addition, but this definition applies to $\TT = \RR \cup \infty$ and not to $\RR$.

\subsection{Decomposition into tropical unit hypercubes}
\begin{definition}
	The $n$-dimensional (min) {\it tropical unit hypercube} of type $j$, for $j=1,\ldots,n+1$, with {\it base point} at the origin, denoted $\In{j}$, is the zonotope defined by the $n$ tropically orthogonal tropical unit vectors
	${\bf \tilde{e}_1},\ldots,{\bf \widehat{\tilde{e}}_j},\ldots,{\bf \tilde{e}_{n+1}}$.
	When the base point is at the point $\varp$ (a translation of $\In{j}$ by $\varp$)
	then it is denoted $\In{j}(\varp)$. 
\end{definition}
\begin{remarks}
	\begin{enumerate}
		\item The zonotopes $\In{j}$, $j \neq n+1$, are defined by $n$ tropically orthogonal line segments of the same tropical length and that is why we use the term hypercube. If, however, we look at the tropical angles between the edges then they are not all equal and that is because we define lines that are parallel to the tropical coordinate axes to be tropically orthogonal, forming a tropical angle of 2 radians and not of $\frac{6}{4}=1.5$ radians.
		\item The tropical unit hypercubes with base point at the origin in the max-plus setting are defined by $-{\bf \tilde{e}_1},\ldots,-{\bf \widehat{\tilde{e}}_j},\ldots,-{\bf \tilde{e}_{n+1}}$.
	\end{enumerate}
\end{remarks}
\begin{proposition}
	\label{prop:compose}
	The tropical unit ball $\Bn$ is the union of the $n+1$ tropical unit hypercubes $\In{j}$, $j = 1, \ldots, n+1$, where the union is disjoint up to common facets.
\end{proposition}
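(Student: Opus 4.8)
The plan is to turn the zonotope definition of each hypercube $\In{j}$ into a system of linear inequalities and then compare it with the inequality description of $\Bn$ supplied by Observation~\ref{obs:ball_conditions}. Throughout I keep the dummy coordinate $x_0=0$, so that $\Bn=\{\varx\in\Rn:\max_{0\le i\le n}x_i-\min_{0\le i\le n}x_i\le 1\}$, and I re-index the $n+1$ hypercubes by $\{0,1,\ldots,n\}$: put $C_0:=\In{n+1}$ and $C_k:=\In{k}$ for $1\le k\le n$. A point of $\In{j}$ has the form $\sum_{i\ne j}a_i{\bf \tilde{e}_i}$ with every $a_i\in[0,1]$; since ${\bf \tilde{e}_i}=e_i$ for $i\le n$ and ${\bf \tilde{e}_{n+1}}=-\One$, reading off coordinates (and using $x_0=0$) identifies $a_i$ with $x_i-x_j$, which yields
\[
C_k=\{\varx:\; x_k=\min_{0\le i\le n}x_i \text{ and } \max_{0\le i\le n}x_i\le x_k+1\},
\]
with $C_0=[0,1]^n$ as the special case. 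This translation is the only genuinely computational step, and it is routine.

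Next I would prove $\Bn=\bigcup_{k=0}^n C_k$. The inclusion $C_k\subseteq\Bn$ is immediate from the display, since on $C_k$ the coordinates $x_0,\ldots,x_n$ span a range of at most $1$; alternatively $\In{j}=\In{j}+\Zero\subseteq\In{j}+I_{{\bf \tilde{e}_j}}=\Bn$ by Proposition~\ref{prop:Minkowski}, because $\Zero\in I_{{\bf \tilde{e}_j}}$. For the reverse inclusion, given $\varx\in\Bn$ choose an index $k\in\{0,\ldots,n\}$ attaining $\min_i x_i$; then $\max_i x_i\le\min_i x_i+1=x_k+1$, so $\varx\in C_k$.

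It then remains to show the union is disjoint up to common facets. If $\varx$ lies in the interior of $C_k$, then $x_k<x_i$ strictly for every $i\ne k$, so $x_k$ is the \emph{unique} minimum among $x_0,\ldots,x_n$; since membership in $C_l$ forces $x_l\le x_k$, this rules out $\varx\in C_l$ for $l\ne k$, so the interiors $\mathrm{int}(C_k)$ are pairwise disjoint. For $k\ne l$, every $\varx\in C_k\cap C_l$ satisfies $x_k=x_l\ (=\min_i x_i)$, and conversely $\{\varx\in C_k:x_l=x_k\}\subseteq C_l$ (and symmetrically); hence $C_k\cap C_l=\{\varx\in C_k:x_l-x_k=0\}$ is the face of $C_k$ cut out by one of the $2n$ facet inequalities of the parallelotope $C_k$, and symmetrically a facet of $C_l$. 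A dimension count — $x_k=x_l$ together with the $n-2$ coordinates $x_i$, $i\notin\{0,k,l\}$, free in $[x_k,x_k+1]$, and $x_k$ itself free in an interval when $k,l\ge 1$ resp. $x_k=0$ when $0\in\{k,l\}$ — confirms this face is $(n-1)$-dimensional, hence a genuine common facet.

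The step demanding the most care is the last one: it is easy to see the interiors do not overlap, but ``disjoint up to common facets'' additionally asserts \emph{facet-to-facet} contact, namely that the overlap of any two pieces is a \emph{full} facet of each; this is why I identify $C_k\cap C_l$ explicitly and verify its dimension rather than appealing to a general-position argument for zonotopes with $n+1$ generators.
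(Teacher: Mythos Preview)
Your proof is correct and rests on the same observation as the paper's: each $\In{j}$ is precisely the portion of $\Bn$ lying in the $j$-th orthant of the tropical coordinate system of Subsection~\ref{subsec:coord}, which in your notation is the region $\{x_k=\min_{0\le i\le n}x_i\}$. The paper's proof is a single sentence invoking that orthant decomposition, whereas you unpack it explicitly via inequalities and, in particular, verify the facet-to-facet claim (identifying $C_k\cap C_l$ and checking its dimension) that the paper leaves implicit.
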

\begin{proof}
	For each $j$, $j = 1, \ldots, n+1$,$\In{j}$ is exactly the part of $\Bn$ in the orthant defined by the coordinate axes in directions ${\bf \tilde{e}_1},\ldots,{\bf \widehat{\tilde{e}}_j},\ldots,{\bf \tilde{e}_{n+1}}$.
\end{proof}
In Figure~\ref{fig:decomposition} we see the decomposition into tropical hypercubes in the plane.
	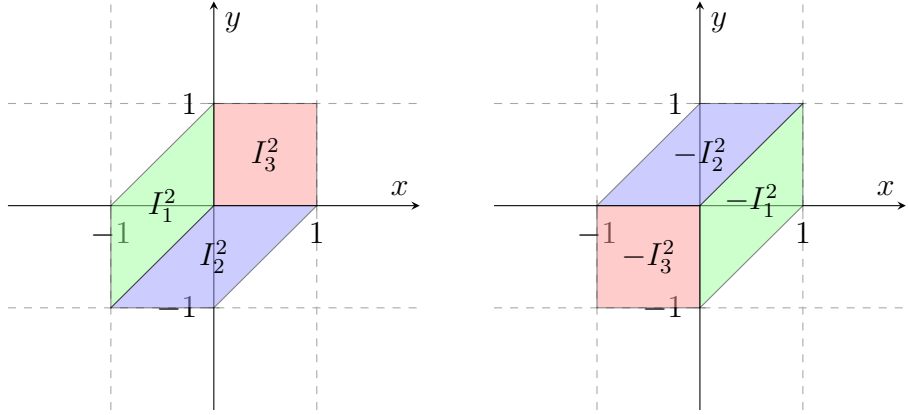
\begin{figure}[h]
		\centering
		\begin{tikzpicture}
		\begin{axis}[
			axis x line=middle,
			axis y line=middle,
			grid = major,
			width=7cm,
			height=7 cm,
			grid style={dashed, gray!70},
			xmin=-2.0,     
			xmax= 2.0,    
			ymin= -2.0,     
			ymax= 2.0,   
			xlabel=$x$,
			ylabel=$y$,
			/pgfplots/xtick={-1.0, 0.0, 1.0}, 
			/pgfplots/ytick={-1.0, 0.0, 1.0}, 
			]
			\draw (0.0,0.0) -- (1.0,0.0) -- (1.0,1.0) -- (0.0,1.0) -- cycle [fill=red!40, 	opacity=0.5];
			\draw (0.0,0.0) -- (0.0,1.0) -- (-1.0,0.0) -- (-1.0,-1.0) -- cycle [fill=green!40, 	opacity=0.5];
			\draw (0.0,0.0) -- (-1.0,-1.0) -- (0.0,-1.0) -- (1.0,0.0) -- cycle [fill=blue!40, 	opacity=0.5];
			\node at (0.5,0.5) {$I^2_3$};
			\node at (-0.5,0.0) {$I^2_1$};
			\node at (0.0,-0.5) {$I^2_2$};
		\end{axis}
	\end{tikzpicture}
	\qquad
	\begin{tikzpicture}
		\begin{axis}[
			axis x line=middle,
			axis y line=middle,
			grid = major,
			width=7cm,
			height=7 cm,
			grid style={dashed, gray!70},
			xmin=-2.0,     
			xmax= 2.0,    
			ymin= -2.0,     
			ymax= 2.0,   
			xlabel=$x$,
			ylabel=$y$,
			/pgfplots/xtick={-1.0, 0.0, 1.0}, 
			/pgfplots/ytick={-1.0, 0.0, 1.0}, 
			]
			\draw (0.0,0.0) -- (-1.0,0.0) -- (-1.0,-1.0) -- (0.0,-1.0) -- cycle [fill=red!40, opacity=0.5];
			\draw (0.0,0.0) -- (0.0,-1.0) -- (1.0,0.0) -- (1.0,1.0) -- cycle [fill=green!40, opacity=0.5];
			\draw (0.0,0.0) -- (1.0,1.0) -- (0.0,1.0) -- (-1.0,0.0) -- cycle [fill=blue!40, opacity=0.5];
			\node at (-0.5,-0.5) {$-I^2_3$};
			\node at (0.5,0.05) {$-I^2_1$};
			\node at (0.0,0.5) {$-I^2_2$};
		\end{axis}
	\end{tikzpicture}
	\caption{Decomposition of the 2-dimensional tropical unit ball into tropical unit hypercubes: min-plus decomposition (left) and max-plus decomposition (right)}
	\label{fig:decomposition}
\end{figure}

\subsection{Tropical geodesic hull}
We show here that the tropical unit ball is the tropical geodesic hull of the points defined by the tropical unit vectors $\bf \tilde{e}_i$.
\begin{proposition}
	\label{prop:geod_hull}
	$\Bn=\mathrm{geod}_{\mathrm{tr}}(\{{\bf \tilde{e}_1}, \ldots, {\bf \tilde{e}_{n+1}}\})=\mathrm{geod}_{\mathrm{tr}}(\{-{\bf \tilde{e}_1}, \ldots, -{\bf \tilde{e}_{n+1}}\})$.
\end{proposition}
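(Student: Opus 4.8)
The plan is to prove the two claimed equalities by a double inclusion argument, exploiting the structural results already established: the characterization of tropically geodesic sets (Theorem~\ref{thm:trop:geod_Rn}), the facet description of $\Bn$, and the decomposition into tropical unit hypercubes (Proposition~\ref{prop:compose}). I will focus on $\Bn=\mathrm{geod}_{\mathrm{tr}}(\{{\bf \tilde{e}_1},\ldots,{\bf \tilde{e}_{n+1}}\})$; the second equality then follows by central symmetry of $\Bn$ together with the fact (noted in the text) that the min tropical line segment and the max tropical line segment between two points traverse the same standard segments in reverse order, so that $\mathrm{geod}_{\mathrm{tr}}$ is invariant under $\varx\mapsto-\varx$.

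First, the inclusion $\mathrm{geod}_{\mathrm{tr}}(\{{\bf \tilde{e}_1},\ldots,{\bf \tilde{e}_{n+1}}\})\subseteq\Bn$. Since $\Bn$ is a tropically geodesic set (stated in Section~\ref{sec:ball}, and consistent with Theorem~\ref{thm:trop:geod_Rn} via the facet inequalities $x_i-x_j\le 1$ with $x_0=0$) and it contains all the points ${\bf \tilde{e}_i}$ (each ${\bf \tilde{e}_i}$ is a vertex of $\Bn$), minimality of the geodesic hull (clause~(1) of the definition of $\mathrm{geod}_{\mathrm{tr}}$) gives the inclusion immediately.

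For the reverse inclusion $\Bn\subseteq\mathrm{geod}_{\mathrm{tr}}(\{{\bf \tilde{e}_1},\ldots,{\bf \tilde{e}_{n+1}}\})$, I would argue orthant by orthant using Proposition~\ref{prop:compose}: it suffices to show each tropical unit hypercube $\In{j}$ lies in the hull. Fix $j$, say $j=n+1$, so that $\In{n+1}=I^n$ is the standard unit cube $[0,1]^n$ with vertices ${\bf \tilde{e}_1},\ldots,{\bf \tilde{e}_n}$ together with $\Zero$ and sums of distinct $\tilde e_i$. I would first show $\Zero\in\mathrm{geod}_{\mathrm{tr}}(\{{\bf \tilde{e}_1},\ldots,{\bf \tilde{e}_{n+1}}\})$: the standard segment from ${\bf \tilde{e}_i}=e_i$ to ${\bf \tilde{e}_{n+1}}=-\One$ is the min tropical line segment (its direction has entries in $\{0,-1\}$ after reordering), so it is a tropical geodesic by the first item of Examples, and this segment passes through points whose coordinatewise minimum with suitable others recovers $\Zero$; more directly, one checks $d_{\mathrm{tr}}(e_i,\Zero)+d_{\mathrm{tr}}(\Zero,-\One)=1+1=2=d_{\mathrm{tr}}(e_i,-\One)$, so $\Zero$ lies on a geodesic between two generators by the Proposition characterizing $g_{\mathrm{tr}}$. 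Next, each vertex $\sum_{i\in A}e_i$ of $I^n$ (for $A\subseteq\{1,\ldots,n\}$) is reached by iterating geodesics: it lies on the tropical geodesic hull of $\{e_i:i\in A\}\cup\{\Zero\}$, since one can order $A$ and concatenate standard segments in coordinate directions staying inside $[0,1]^n$ — such a concatenation is a tropical piecewise linear geodesic between $\Zero$ and $\sum_{i\in A}e_i$ because each segment is parallel to a coordinate axis and the tropical distances add. Having all $2^n$ vertices of $I^n$ in the hull, I then invoke that $\mathrm{geod}_{\mathrm{tr}}$ is closed under taking geodesics (clause~(3)): since $I^n$ is a tropically geodesic set (Theorem~\ref{thm:trop:geod_Rn}), $I^n=\mathrm{geod}_{\mathrm{tr}}(\text{its vertices})$, which is contained in our hull; alternatively, one shows directly that every point of $[0,1]^n$ lies on a coordinate-axis-parallel segment between two faces of lower dimension and induct on dimension, exactly as in the Example with the simplex $\Delta$. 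Repeating for each $j$ and taking the union via Proposition~\ref{prop:compose} yields $\Bn\subseteq\mathrm{geod}_{\mathrm{tr}}(\{{\bf \tilde{e}_1},\ldots,{\bf \tilde{e}_{n+1}}\})$.

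The main obstacle I anticipate is the bookkeeping in showing that an arbitrary interior point of a hypercube $\In{j}$ — not just its vertices — is captured after finitely many applications of $g_{\mathrm{tr}}$, i.e.\ that the hull "fills in" the solid cube rather than just its skeleton. The cleanest way around this is not to iterate explicitly but to note that $\In{j}$, being defined by inequalities of the form $0\le x_i\le 1$ (in tropical coordinates, $0\le [\,\cdot\,]_k\le 1$), is tropically geodesic by Theorem~\ref{thm:trop:geod_Rn}, hence equals the geodesic hull of its own vertex set; since all those vertices are among $\pm{\bf \tilde{e}_i}$ or obtainable from them as above, and since the geodesic hull of a subset is contained in the geodesic hull of the whole, we get $\In{j}\subseteq\mathrm{geod}_{\mathrm{tr}}(\{{\bf \tilde{e}_1},\ldots,{\bf \tilde{e}_{n+1}}\})$ without any delicate filling argument. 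Care is needed to confirm that every vertex of every $\In{j}$ does land in the hull — this is where the $\Zero$-is-in-the-hull step and the concatenation-of-coordinate-segments step do the real work — but once that is in place the rest is formal.
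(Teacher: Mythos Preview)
Your approach is sound but far more elaborate than the paper's. The paper does not do a double inclusion at all: it simply observes that, by Theorem~\ref{thm:trop:geod_Rn}, the geodesic hull of any finite set $S$ is the intersection of the tightest admissible half-spaces, namely those with $a_i=\min_{s\in S}s_i$, $a'_i=\max_{s\in S}s_i$, and $b_{ij}=\min_{s\in S}(s_i-s_j)$. Evaluating these for $S=\{{\bf \tilde{e}_1},\ldots,{\bf \tilde{e}_{n+1}}\}$ gives exactly $|x_i|\le 1$ and $|x_i-x_j|\le 1$, which is $\Bn$ by Observation~\ref{obs:ball_conditions}; the case $S=\{-{\bf \tilde{e}_1},\ldots,-{\bf \tilde{e}_{n+1}}\}$ is identical. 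Your route---decompose $\Bn$ into hypercubes via Proposition~\ref{prop:compose}, chase every hypercube vertex into the hull, then fill each cube using Theorem~\ref{thm:trop:geod_Rn}---ends up invoking the same characterization theorem, only $n+1$ times on pieces rather than once on the whole. One step is also loosely argued: exhibiting a geodesic \emph{from} $\Zero$ \emph{to} $\sum_{i\in A}e_i$ does not by itself place the endpoint in the hull; you need it to lie \emph{on} a geodesic between two points already there (e.g.\ induct on $|A|$ and check, for $j\in A$ with $|A|\ge 2$, that $d_{\mathrm{tr}}\bigl(e_j,\sum_{i\in A}e_i\bigr)+d_{\mathrm{tr}}\bigl(\sum_{i\in A}e_i,\sum_{i\in A\setminus\{j\}}e_i\bigr)=1+1=d_{\mathrm{tr}}\bigl(e_j,\sum_{i\in A\setminus\{j\}}e_i\bigr)$). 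What your longer argument buys is a more constructive picture of how the ball is swept out from the ${\bf \tilde{e}_i}$ by iterated geodesics; what the paper's one-line proof buys is that no vertex-chasing or cube decomposition is needed at all.
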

\begin{proof}
	Given the set $S=\{{\bf \tilde{e}_1}, \ldots, {\bf \tilde{e}_{n+1}}\}$ then by  Theorem~\ref{thm:trop:geod_Rn}, $\mathrm{geod}_{\mathrm{tr}}(S)$ is the intersection of the half-spaces $-1 \leq x_i$, $x_i \leq 1$, for $i=1,\ldots,n$, and $-1 \leq x_i-x_j$, $x_i-x_j \leq 1$, for $i = 1,\ldots,n$, $j \neq i$, which coincides with the description of the supporting hyperplanes of $\Bn$ given in Observation~\ref{obs:ball_conditions}. The same holds for $S=\{-{\bf \tilde{e}_1}, \ldots, -{\bf \tilde{e}_{n+1}}\}$.
\end{proof}

\subsection{Tropical linear combination and tropical convexity}
\label{subsec:tr_convexity}
The following definitions are with regard to min-plus operations, similar definitions hold in the max-plus setting.
The notion of tropical convexity was introduced by Develin and Sturmfels \cite{DS04} and by Cohen, Gaubert, Quadrat and Singer \cite{CGQS05} (see also \cite{MS15}, p. 228). 
A set $S \subseteq \Ran$ is {\it tropically convex} if it is closed under tropical linear combinations: for all $\varx, \vary \in S$ and $a,b \in \RR$, we have $a \tm \varx \ta b \tm \vary \in S$. 
The {\it tropical convex hull} of a set $S$ is the set of all tropical linear combinations of the elements of $S$.

Let us now see what happens when a tropical linear combination in $\Projn$ is mapped to $\Rn$. 
As before, we represent each equivalent class in $\Projn$ by the unique element with last coordinate zero.
Then a min linear combinations $a_1 \tm {\bf x_1} \ta \cdots \ta a_r \tm {\bf x_r}$ of such representatives can always be chosen, by adding the same scalar $c$ to each coefficient, so that one of the $a_j$ is zero and the other coefficients are non-negative. Then the linear combination is similar to the standard definition of convexity since 0 is the identity element of tropical multiplication (the analogue of standard $1$) and $\infty$ (when working over $\TT$ and not over $\TT^{\times}$) is the identity element of tropical addition (the analogue of standard $0$), so that we have all coefficients of the linear combination between the tropical zero element and the tropical one element and their sum (the tropical minimum) is the tropical one element ($0$ in the tropical setting). This means that the set of all tropical linear combinations $a_1 \tm {\bf x_1} \ta \cdots \ta a_r {\bf x_r}$
is the same as the set of all convex combinations $a_1 \tm {\bf x_1} \ta \cdots \ta a_r {\bf x_r}$, where the $a_j$ are between $0$ and $\infty$ and $a_1 \ta \cdots \ta a_r = 0$, exactly as defined in \cite{DS04}.
But now, since at least one coefficient $a_j$ in the tropical linear combination is zero, and the last coordinate of each $x_j$ is zero, the result of the tropical linear combination is again with last entry being zero, which means that by our choice of mapping, the first $n$ coordinates of $\vary = a_1 \tm {\bf x_1} \ta \cdots \ta a_r {\bf x_r}$ are what we get if we do the computation in $\Rn$ with the same coefficients.

So, let us look more closely at the tropical linear combinations when done in $\Rn$.
When a tropical convex hull is of a finite number of generators then the result is a {\it tropical polytope} (see \cite{DS04}, \cite{MS15}).
For two generators $\varx=(x_1,\ldots,x_n)$ and $\vary=(y_1,\ldots,y_n)$, the convex hull is $\{ a \tm \varx \ta b \tm \vary :  a,b \geq 0 \mbox{ and } a \ta b = 0 \}$. When $a=b=0$ then $\varz = \varx \ta \vary = (\min(x_1,y_1),\ldots,\min(x_n,y_n))$.
If $a=0$ and $b$ goes over the positive real numbers then we obtain the tropical line segment from $\varz$ to $\varx$, and when $b=0$ and $a$ goes over the positive real numbers then we obtain the tropical line segment from $\varz$ to $\vary$. It follows that the set of tropical linear combinations of $\varx$ and $\vary$ in $\Rn$ is the tropical line segment between $\varx$ and $\vary$, which is indeed the tropical convex hull of $\{\varx, \vary\}$.

For more than two generators the convex hull $P$ may be composed of several polytopes of different dimensions (see \cite{DS04}, \cite{MS15}) and although $P$ is tropically convex, it is not, in general, a convex set in Euclidean geometry.
The next proposition is clear and follows from known results (see, e.g. \cite{MS15}, \cite{Yoshida21}).
\begin{proposition}
	 $\Bn$ is both convex and tropically convex.
\end{proposition}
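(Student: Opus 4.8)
The plan is to obtain the proposition as an immediate consequence of the descriptions of $\Bn$ already established, treating the two assertions separately. For ordinary (Euclidean) convexity, I would invoke Observation~\ref{obs:ball_conditions} together with the subsequent list of supporting hyperplanes: with the dummy coordinate $x_0 = 0$ fixed, $\Bn$ is exactly the intersection of the $n(n+1)$ closed half-spaces $x_i - x_j \le 1$, $0 \le i \ne j \le n$, and a finite intersection of half-spaces is convex. Equivalently, one can observe that $\Bn$ is the unit sublevel set of the tropical norm $\tnorm{\varx} = \max_{i,j}\{x_i - x_j\}$, which, being a pointwise maximum of linear functionals, is a convex function on $\Rn$; hence $\{\varx : \tnorm{\varx} \le 1\} = \Bn$ is convex.

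For tropical convexity the shortest route is via Proposition~\ref{prop:geod_hull}, which identifies $\Bn$ with $\mathrm{geod}_{\mathrm{tr}}(\{{\bf \tilde{e}_1}, \ldots, {\bf \tilde{e}_{n+1}}\})$; by item (4) of the definition of the tropical geodesic hull, any set of that form is closed under both min- and max-tropical linear combinations, so in particular it is tropically convex. If instead a self-contained verification is wanted, I would check the closure property directly. Given $\varx, \vary \in \Bn$ and $a, b \in \RR$, put $\varz = a \tm \varx \ta b \tm \vary$, so that $z_k = \min(a + x_k, b + y_k)$ for every $k$ (including $z_0 = \min(a,b)$ for the dummy coordinate). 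Fix indices $i, j$ and assume without loss of generality that the minimum defining $z_i$ is attained by the first term, $z_i = a + x_i$. If $z_j = a + x_j$ then $z_i - z_j = x_i - x_j \le 1$ since $\varx \in \Bn$; if instead $z_j = b + y_j$, then using $a + x_i \le b + y_i$ (which holds because $z_i = a + x_i$) one gets $z_i - z_j = (a+x_i) - (b + y_j) \le (b + y_i) - (b + y_j) = y_i - y_j \le 1$ since $\vary \in \Bn$. Running the symmetric case (swapping $i$ and $j$) and letting $i, j$ range over all pairs shows $z_i - z_j \le 1$ for all $0 \le i, j \le n$, i.e. $\varz \in \Bn$. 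The same computation with $\min$ replaced by $\max$ handles the max-plus combinations.

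I do not expect any genuine obstacle here: the proposition is essentially a corollary of Observation~\ref{obs:ball_conditions} and Proposition~\ref{prop:geod_hull}. The only point that needs a little care, if one opts for the explicit computation rather than the citation, is the bookkeeping with the dummy coordinate $x_0 = 0$ and the fact that a tropical linear combination of representatives with vanishing last coordinate again has vanishing last coordinate, so that the computation in $\Projn$ agrees with the one in $\Rn$ — but this was already spelled out in the discussion preceding the proposition, and can simply be referenced.
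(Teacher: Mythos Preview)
Your proposal is correct and overlaps substantially with the paper's proof: both obtain Euclidean convexity from Observation~\ref{obs:ball_conditions} (the paper checks directly that convex combinations preserve the two conditions and also mentions the Minkowski-sum description, while you phrase it as an intersection of half-spaces / sublevel set of a convex function --- these are equivalent reformulations), and both cite Proposition~\ref{prop:geod_hull} as a route to tropical convexity.

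The one genuine difference is in the primary argument for tropical convexity. The paper's main reference is a \emph{forward} reference to Proposition~\ref{prop:generators} (``it is generated by a finite number of vectors''), with the geodesic-hull argument offered only as an alternative. You instead make Proposition~\ref{prop:geod_hull} together with item~(4) of the geodesic-hull definition your primary route, and supply as a self-contained alternative the explicit coordinatewise verification that $a\tm\varx\ta b\tm\vary$ satisfies $z_i-z_j\le 1$; that computation does not appear in the paper. Your route has the advantage of avoiding the forward reference and of actually exhibiting why the inequalities are preserved under $\min$, at the mild cost of the bookkeeping with the dummy coordinate (which you flag correctly).
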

\begin{proof}
	It is easy to see that if $\varx,\vary \in \Bn$ satisfy the two conditions of Observation~ \ref{obs:ball_conditions} then so does $\lambda \varx  + (1-\lambda) \vary$, for $0 \leq \lambda \leq 1$. Convexity follows also from the fact that $\Bn$ is the Minkowski sum of convex sets (Proposition~\ref{prop:Minkowski}). 
	 
	It is tropically convex since it is generated by a finite number of vectors, as shown in Proposition~\ref{prop:generators}.
	
	Another way: by Proposition~\ref{prop:geod_hull}, $\Bn$ is tropically geodesic and this implies convexity and tropical convexity. 
\end{proof}

Remember that we denoted by ${\bf \tilde{e}_{n+1}}$ the vector $-\One \in \Rn$.
\begin{proposition}
	\label{prop:generators}
	Let $S=\{-{\bf \tilde{e}_1}, \ldots, -{\bf \tilde{e}_{n+1}}\}$. Then $\Bn$ is generated by $S$, that is, $\Bn$ is the (min) tropical convex hull of $S$.
\end{proposition}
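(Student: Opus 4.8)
The plan is to show two inclusions, relying on the explicit description of $\Bn$ from Observation~\ref{obs:ball_conditions} (namely $\vert x_i - x_j \vert \le 1$ for all $0 \le i,j \le n$, with $x_0 = 0$) together with the vertex list and the fact, established in Proposition~\ref{prop:geod_hull}, that $\Bn = \mathrm{geod}_{\mathrm{tr}}(\{-{\bf \tilde e_1}, \ldots, -{\bf \tilde e_{n+1}}\})$. Write $\varv_i := -{\bf \tilde e_i}$, so $\varv_i = (0,\ldots,0,-1,0,\ldots,0)$ for $i \le n$ and $\varv_{n+1} = \One$. Recall that a min tropical linear combination of the $\varv_i$ is, after normalizing one coefficient to $0$, a point $\varz = \bigoplus_{i=1}^{n+1} a_i \tm \varv_i$ with all $a_i \ge 0$ and $\min_i a_i = 0$, and the set of all such points is the min tropical convex hull $\mathrm{tconv}(S)$.

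First I would prove $\mathrm{tconv}(S) \subseteq \Bn$. Since $\Bn$ is both convex and tropically convex (the proposition just before this one) and contains each generator $\varv_i$ — indeed $\varv_i \in \Bn$ because $\tnorm{\varv_i} = 1$ — it contains every tropical linear combination of the $\varv_i$; this inclusion is essentially immediate. Alternatively one can verify directly that for $\varz = \bigoplus a_i \tm \varv_i$ one has $z_k = \min(a_{n+1}+1,\; a_k - 1,\; \min_{i \ne k, i \le n} a_i)$ for $1 \le k \le n$ (reading $a_k - 1$ from the $\varv_k$ term and $a_i$ from the $\varv_i$ term with a $0$ in slot $k$, and $a_{n+1}+1$ from $\varv_{n+1} = \One$), wait — more carefully, the $k$-th coordinate of $a_i \tm \varv_i$ is $a_i$ if $i \ne k$ and $i \le n$, is $a_k - 1$ if $i = k \le n$, and is $a_{n+1} + 1$ if $i = n+1$; taking the min over $i$ gives $z_k$. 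From this closed form one checks $\vert z_k - z_l \vert \le 1$ and $\vert z_k \vert \le 1$ directly.

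The substantive direction is $\Bn \subseteq \mathrm{tconv}(S)$. The cleanest route is to show every vertex of $\Bn$ lies in $\mathrm{tconv}(S)$ and then invoke tropical convexity of $\mathrm{tconv}(S)$ together with the fact that $\Bn$, being a polytope, is the ordinary convex hull of its vertices — but since ordinary convexity does not follow from tropical convexity, I would instead argue that $\Bn$ is covered by the tropical line segments (and their iterated tropical spans) joining the generators, matching the structure already used in Proposition~\ref{prop:geod_hull}: indeed $\mathrm{geod}_{\mathrm{tr}}(S)$ is the closure of $S$ under both min and max tropical combinations, and one shows that for $S = \{\varv_1,\ldots,\varv_{n+1}\}$ the min tropical convex hull is already closed under max combinations as well, so $\mathrm{tconv}(S) = \mathrm{geod}_{\mathrm{tr}}(S) = \Bn$. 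Concretely: the $\{0,-1\}$-vertices of $\Bn$ are exactly the points $\bigoplus_{i \in T} 0 \tm \varv_i$ for $\emptyset \ne T \subseteq \{1,\ldots,n\}$ (coordinate-wise minimum of a subset of the $\varv_i$, $i \le n$), so these sit in $\mathrm{tconv}(S)$; the $\{0,1\}$-vertices are handled symmetrically using $\varv_{n+1} = \One$ and the identity $\One \oplus \varv_i$ shifted appropriately, or by noting $\bigl(\{0,1\}^n \setminus \{\Zero\}\bigr)$ vertices arise as $1 \tm \bigl(\bigoplus_{i \in T} 0 \tm \varv_i\bigr) \oplus 0 \tm \varv_{n+1}$ for suitable $T$; and then every point of $\Bn$ lies on a min tropical segment between a $\{0,1\}$-point and a $\{0,-1\}$-point of its boundary — this is exactly the facet/geodesic picture, where each facet $F_i$, $F_{-i}$, $F_{ij}$ is swept by such segments.

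The main obstacle I anticipate is the last step: carefully arguing that $\Bn$ is covered by min tropical segments whose endpoints already lie in $\mathrm{tconv}(S)$, i.e. that no ``Euclidean interior'' point of $\Bn$ is missed. The hexagon and dodecahedron pictures (Figure~\ref{fig:tr_balls}) make this plausible, and the decomposition of $\Bn$ into the hypercubes $\In{j}$ (Proposition~\ref{prop:compose}) reduces it to checking one orthant at a time: within the orthant $\Rn_j$ one has a genuine $n$-cube $\In{j}$, and a point of $\In{j}$ with tropical coordinates $[y_1,\ldots,y_{n+1}]$, $y_j = 0$, $0 \le y_i \le 1$, is the min tropical combination $\bigoplus_i y_i \tm (-{\bf \tilde e_i})$ — here the identification is transparent because in that orthant the ``min'' selects coordinates without interference. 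Assembling the $n+1$ orthant-wise identifications and checking consistency on the shared facets finishes the proof. I would therefore structure the write-up as: (1) the closed-form computation of $z_k$ giving $\mathrm{tconv}(S) \subseteq \Bn$; (2) the orthant-by-orthant surjectivity $\Bn \subseteq \mathrm{tconv}(S)$ via Proposition~\ref{prop:compose}; (3) a one-line remark that the max-plus analogue follows by the central symmetry $\Bn = -\Bn$.
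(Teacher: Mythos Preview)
Your handling of $\mathrm{tconv}(S) \subseteq \Bn$ is fine: the closed form $z_k = \min\bigl(a_{n+1}+1,\; a_k-1,\; \min_{i \ne k,\, i \le n} a_i\bigr)$ is correct, and from it the inequalities of Observation~\ref{obs:ball_conditions} follow. (The shortcut ``$\Bn$ is tropically convex and contains each $\varv_i$'' is a little delicate, since the paper's first justification of that tropical convexity actually cites Proposition~\ref{prop:generators}; you would need to rely on the geodesic-hull route instead.)

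The substantive direction $\Bn \subseteq \mathrm{tconv}(S)$ has a genuine gap. The orthant-by-orthant formula you propose is wrong as written. Take $j = n+1$: then $\In{n+1}$ is the positive unit cube, and a point $\varx = (x_1,\ldots,x_n)$ with $0 \le x_i \le 1$ has tropical coordinates $y_i = x_i$ for $i \le n$ and $y_{n+1} = 0$. The $k$-th coordinate of $\bigoplus_{i=1}^{n+1} y_i \tm \varv_i$ is
\[
\min\Bigl(x_k - 1,\; \min_{i \ne k,\, i \le n} x_i,\; 1\Bigr),
\]
which is at most $x_k - 1 \le 0$, not $x_k$; for instance $\varx = \One$ yields $\Zero$. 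So the ``the min selects coordinates without interference'' claim fails, and the remaining sketched alternatives (showing $\mathrm{tconv}(S)$ is already max-closed, or sweeping the facets by tropical segments between $\{0,1\}$- and $\{0,-1\}$-vertices) are not carried out.

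By contrast, the paper bypasses all of this structure with a single explicit formula: for any $\varx = (x_1,\ldots,x_n) \in \Bn$ one writes
\[
\varx \;=\; (1+x_1)\tm \varv_1 \;\ta\; \cdots \;\ta\; (1+x_n)\tm \varv_n \;\ta\; 0 \tm \varv_{n+1},
\]
with all coefficients $\ge 0$ and the last one equal to $0$. The $k$-th coordinate of the right-hand side is $\min\bigl(x_k,\; \min_{i \ne k}(1+x_i),\; 1\bigr)$, which equals $x_k$ precisely because $|x_k| \le 1$ and $x_k - x_i \le 1$ for all $i$. No decomposition into $\In{j}$, no vertex enumeration, and no comparison with $\mathrm{geod}_{\mathrm{tr}}$ is needed. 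Notice that replacing your coefficients $y_i$ by $1 + x_i$ (in ordinary $\Rn$-coordinates, uniformly over all orthants) recovers exactly this formula, so your plan was one normalization away from the short argument.
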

\begin{proof}
	Let $\varx = (x_1, \ldots, x_n) \in \Bn$ and assume, w.l.o.g., that
	$$
	-1 \leq x_1 \leq x_2 \leq \cdots \leq x_n \leq 1,
	$$
	with $x_n - x_1 \leq 1$.
	We need to show that ${\varx}$ equals a tropical linear combination of $-{\bf \tilde{e}_1}, \ldots, -{\bf \tilde{e}_{n+1}}$ with one coefficient being zero and the others non-negative.
	This indeed is the case:
	\begin{eqnarray*}
			(1+x_1)\tm(-{\bf \tilde{e}_1}) \hspace{-0.8em}&\ta&\hspace{-0.8em} (1+x_2)\tm(-{\bf \tilde{e}_2}) \ta \cdots \ta (1+x_n)\tm(-{\bf \tilde{e}_n}) \ta 0 \tm (-{\bf \tilde{e}_{n+1}}) \\
			&=&\hspace{-0.8em}(x_1,1+x_1,\ldots,1+x_1)\ta (1+x_2,x_2,1+x_2,\ldots,1+x_2)\ta  \\ &\cdots& \hspace{-0.5em} \ta (1+x_n,\ldots,1+x_n,x_n)\ta(1,1,\ldots,1) \\
			&=&\hspace{-0.8em} (x_1, \ldots, x_n)=\varx,
	\end{eqnarray*}
	because, for all $i \neq j$,
	$$
	x_i - x_j \leq x_n - x_1 \leq 1,
	$$
	that is,
	$$
	x_i \ta (1+x_j) = x_i.
	$$
	
	For the converse, we show that any vector generated by $-{\bf \tilde{e}_1}, \ldots, -{\bf \tilde{e}_{n+1}}$ is in $\Bn$. So, let $\varx = a_1 \tm (-{\bf \tilde{e}_1}) \ta \cdots \ta a_{n+1} \tm (-{\bf \tilde{e}_{n+1}})$, with $a_i \geq 0$, for all $i$, and $a_j=0$, for some $j$. The conditions on the coefficients and the fact that all entries of the generating vectors are between $-1$ and $1$ guarantee that this is also the case with the entries of $\varx$. In addition, since the difference between the maximal entry and the minimal entry in each $-{\bf \tilde{e}_i}$ is at most $1$, then this holds also for $\varx$ (regardless of the conditions imposed on the coefficients $a_i$). It follows that $\tnorm{\varx} \leq 1$, that is, $\varx \in \Bn$.
\end{proof}
	\begin{remarks}
		\begin{enumerate}
			\item The set $S=\{-{\bf \tilde{e}_1}, \ldots, -{\bf \tilde{e}_{n+1}}\}$ forms a {\it minimal} set of generators of $\Bn$ since it is easy to check that none of these vectors can be generated by the others. By \cite{DS04}, a minimal set of generators is {\it unique}.
			\item In the max-plus setting, where the identity element for tropical addition is $-\infty$, we would rather take non-positive coefficients in the tropical linear combination, with one coefficient being $0$, and then $\Bn$ is generated by ${\bf \tilde{e}_1}, \ldots, {\bf \tilde{e}_{n+1}}$.
		\end{enumerate}
	\end{remarks}

\section{Honeycomb of $\Rn$ with tropical balls}
\label{sec:honeycomb_Rn}
As is known, regular hexagons form a tessellation of the plane. The tropical disks of dimension 2 are elongated hexagons, so, they are not regular with respect to Euclidean metric, but they are regular in the tropical setting: the edges are of the same tropical length and the tropical angle between two (standard) adjacent edges is 2 radians (remember that we define a tropical circle to be of 6 radians and the tropical angle between two tropically orthogonal tropical lines is 2 radians). These hexagons also tile the plane (see Figure~\ref{fig:tessellation}). The centers of the tropical balls of radius 1 in the tropical honeycomb of the plane are at the points $\{ (c_1,c_2) \in\ZZ^2 : c_1+c_2 \equiv 0 \: (\mathrm{mod}~3) \}$, which are on the lattice generated by the (standard) vectors $(2,1)$ and $(-1,1)$.

This property of forming a honeycomb holds, in fact, in every dimension $n$, $n \geq 1$, that is, $n$-dimensional tropical balls $\Bn$ fill $\Rn$ facet-to-facet by translations.
This follows from $\Bn$ being defined by the vectors ${\bf \tilde{e}_1}, \ldots, {\bf \tilde{e}_{n}}, {\bf \tilde{e}_{n+1}}= -\One$. These vectors form the rows of a {\it totally unimodular matrix} (every square submatrix has determinant 0,1, or -1 ), or, equivalently, the associated matroid is regular \cite{Tut58}.
It follows from McMullen \cite{McM75} (see also \cite{DG04}) that such zonotopes form a honeycomb of $\Rn$. 
The honeycomb of tropical balls, when projected in $\RR^{n+1} / \RR\One$ to the hyperplane $\sum_{i=1}^{n+1}x_i = 0 \subset \RR^{n+1}$, is the Voronoi regions of the lattices $A_n$ that is described in the book of Conway and Slone \cite[Ch. 21]{CS88} (see also \cite{BBMP19}).
We give here an explicit proof of the tiling property of $\Bn$.
\begin{figure}[h]
	\centering
	\begin{tikzpicture}[scale=0.5, transform shape]
		\foreach \x in {0,...,6} {
			\foreach \y in {0,...,5} {
				\node at (\x+3*\y,-\x)
				{ \tikz\draw [thick,fill=blue!20, opacity=0.8]
					(1.0,0.0) -- (1.0,1.0) -- (0.0,1.0) -- (-1.0,0.0) -- (-1.0,-1.0) -- (0.0,-1.0) -- (1.0,0.0) ;}; }; }
	\end{tikzpicture}
	\caption{Tessellation of the plane by tropical disks}
	\label{fig:tessellation}
\end{figure}
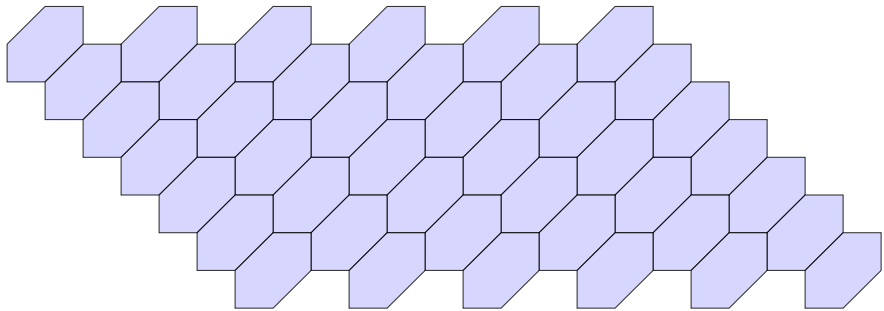

As we know, in dimension $n \geq 2$, standard balls do not form a honeycomb of $\Rn$ and the most we have is Besicovitch Covering Theorem \cite{Bes45}, which states that there is a disjoint almost covering of domains of finite regular Borel measure but with countably many balls of different sizes with zero infimum radius. \\

We denote by $\{a\}=a-\lfloor a \rfloor$ the fractional part of $a \in \RR$.
\begin{theorem}
	\label{thm:honeycomb}
	The collection of $n$-dimensional tropical unit balls $\Bn({\bf c})$ with centers at the lattice
	\begin{equation}
		\Lambda : \{ {\bf c} = (c_1,\ldots,c_n) \in\Zn : \sum_{i=1}^{n} c_i \equiv 0 \: (\mathrm{mod}~n+1)
		\label{eq:center} 
	\end{equation}
	forms a honeycomb of $\Rn$.
\end{theorem}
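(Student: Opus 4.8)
The plan is to exhibit an explicit assignment that, for each point $\varx \in \Rn$ avoiding the union of all facet hyperplanes of the balls $\Bn(\mathbf{c})$, produces a unique lattice point $\mathbf{c} \in \Lambda$ with $\varx \in \Bn(\mathbf{c})$; this simultaneously proves the covering property (every point lies in some ball) and the packing property (interiors are disjoint), and the facet-to-facet condition will then follow from the combinatorics of the boundary hyperplanes. First I would use the $n+1$-coordinate description of $\Bn$ from Observation~\ref{obs:ball_conditions}: with the dummy variable $x_0 = 0$, a translate $\Bn(\mathbf{c})$ is $\{\varx : |x_i - x_j - (c_i - c_j)| \le 1 \text{ for all } 0 \le i,j \le n\}$, where $c_0 := 0$. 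So $\varx \in \Bn(\mathbf{c})$ iff $|(x_i - c_i) - (x_j - c_j)| \le 1$ for all $i,j$, i.e.\ iff the numbers $x_i - c_i$ ($i = 0,\ldots,n$) all lie within a window of width $1$.

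Next I would reduce to a one-dimensional rounding problem. Fix $\varx$ and set $s := \sum_{i=1}^n x_i$. For the candidate center I would take $c_i := \lfloor x_i - t \rfloor$ for a suitable common real shift $t = t(\varx)$, so that each $x_i - c_i = \{x_i - t\} + t$ lies in $[t, t+1)$, automatically giving the width-$1$ window condition; one must also handle $i = 0$, which forces $0 - c_0 = 0$ to lie in the same window, i.e.\ forces $t \le 0 < t+1$, so in fact $t \in (-1, 0]$ and $c_0 = 0$ as required. Then the congruence constraint $\sum_{i=1}^n c_i \equiv 0 \pmod{n+1}$ becomes: choose $t \in (-1,0]$ so that $\sum_{i=1}^n \lfloor x_i - t \rfloor \equiv 0 \pmod{n+1}$. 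The function $t \mapsto \sum_{i=1}^n \lfloor x_i - t \rfloor$ is a nonincreasing step function of $t$ on $(-1,0]$ which decreases by exactly $1$ at each of the (at most $n$) values $t = \{x_i\} - 1$ where some $\lfloor x_i - t\rfloor$ drops, and across the whole interval it decreases by exactly $n$ (from its value just below $t=0$ down to its value at $t$ slightly above $-1$). Hence as $t$ ranges over $(-1,0]$ the sum takes $n+1$ consecutive integer values, so exactly one of these values is $\equiv 0 \pmod{n+1}$, giving existence and uniqueness of $\mathbf{c} \in \Lambda$ with $\varx \in \Bn(\mathbf{c})$ — provided $\varx$ avoids the finitely many hyperplanes $\{x_i - x_j \in \ZZ\}$, which are exactly the facet hyperplanes of the tiling, so on the complement the assignment is well-defined and single-valued.

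From this I would read off the conclusions. Existence of a covering: every $\varx$ (even on the hyperplanes, by a limiting/closure argument since the $\Bn(\mathbf{c})$ are closed) lies in some $\Bn(\mathbf{c})$. Disjointness of interiors: if $\varx$ were in the interior of two balls $\Bn(\mathbf{c})$ and $\Bn(\mathbf{c}')$, then the strict inequalities $|(x_i - c_i) - (x_j - c_j)| < 1$ hold for both, which (running the step-function argument with strict windows) pins down the value of $\sum c_i$ modulo $n+1$ and then pins down each $c_i$, forcing $\mathbf{c} = \mathbf{c}'$. Facet-to-facet: two balls $\Bn(\mathbf{c})$, $\Bn(\mathbf{c}')$ that share boundary points meet along a common face because the shared points satisfy a common set of the defining equalities $x_i - x_j - (c_i - c_j) = \pm 1$; matching this against the facet list (i), (ii), (iii) of $\Bn$ preceding the honeycomb section and checking that the translation $\mathbf{c}' - \mathbf{c}$ realizing adjacency is itself a lattice vector $\pm(\tilde{e}_k - \tilde{e}_l)$ (equivalently, a difference of two tropical unit vectors, which lies in $\Lambda$) shows the shared face is a full facet of each.

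The main obstacle I expect is the bookkeeping in the step-function count — specifically, verifying carefully that over $t \in (-1,0]$ the quantity $\sum_{i=1}^n \lfloor x_i - t\rfloor$ really drops by exactly $n$ and changes only in unit steps (this can fail to be transparent if several $\{x_i\}$ coincide, i.e.\ several $x_i - x_j \in \ZZ$, which is precisely why one restricts to the complement of the facet hyperplanes), and then correctly matching the ``shift $t$'' normalization with the dummy coordinate $x_0 = 0$ so that $c_0 = 0$ is consistent. Once the width-$1$-window reformulation and the interval $t \in (-1,0]$ are set up cleanly, the rest is essentially the observation that a monotone integer-valued step function taking $n+1$ consecutive values hits each residue mod $n+1$ exactly once.
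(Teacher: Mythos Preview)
Your proposal is correct and follows essentially the same approach as the paper: both construct the unique center $\mathbf{c}$ by rounding each $x_i$ to one of its two nearest integers, with the pattern of roundings forced by the ordering of the fractional parts $\{x_i\}$ together with the congruence $\sum c_i \equiv 0 \pmod{n+1}$. Your continuous shift parameter $t \in (-1,0]$ and the step-function monotonicity argument (the sum $\sum_i \lfloor x_i - t\rfloor$ takes $n+1$ consecutive integer values) is a clean repackaging of the paper's explicit case split on the residue $k = \sum \lfloor x_i \rfloor \pmod{n+1}$, but the content is the same.
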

\begin{proof}
	Since the tropical unit balls $\Bn({\bf c})$ are closed sets, it suffices to show that every point in $\Rn$ which is not on the surface of a tropical unit ball belongs to exactly one ball.
	If ${\varx} = (x_1,\ldots,x_n) \in \Rn$ is an inner point of $\Bn({\bf c})$, with center ${\bf c} = (c_1,\ldots,c_n)$, then (at least) one of the following three conditions holds.
	\begin{enumerate}
		\item[(i)] $0 \leq x_i - c_i < 1$, for all $i$;
		\item[(ii)] $-1 < x_i - c_i \leq 0$, for all $i$;
		\item[(iii)] there exists a pair of indices $(i,j)$, such that $x_i > c_i$ and $x_j < c_j$, and for all such pairs $(i,j)$, $x_i+c_j-(x_j+c_i) < 1.$
	\end{enumerate}
	So, let ${\varx} = (x_1,\ldots,x_n) \in\Rn$, where ${\varx}$ is not on the surface of a tropical unit ball, and suppose, w.l.o.g., that
	$0 \leq \{x_1\}  \leq \{x_2\} \leq \cdots \leq \{x_n\}<1$.
	By the above conditions, if ${\varx} \in \Bn({\bf c})$ then,  for all $i$, $c_i=\lfloor x_i \rfloor$ or $c_i=\lfloor x_i \rfloor +1$.
	First, we show that ${\varx} \in \Bn({\bf c})$ for some $\bf c$ that satisfies $\eqref{eq:center}$.
	
	Let $\sum_{i=1}^{n} \lfloor x_i \rfloor \equiv k \: (\mathrm{mod}~n+1)$.
	\begin{enumerate}
		\item If $k=0$ then ${\varx}$ is an inner point of $\Bn({\bf c})$ with $c_i=\lfloor x_i \rfloor$, for all $i$, and satisfying (i).
		\item If $k=1$ then ${\varx}$ is an inner point of $\Bn({\bf c})$ with $c_i = \lfloor x_i \rfloor +1$, for all $i$, so that $\sum_{i=1}^{n} c_i \equiv 0 \: (\mathrm{mod}~n+1)$ and condition (ii) is satisfied. 
		\item If $1 < k \leq n$ then for ${\varx}$ to be an inner point of $\Bn({\bf c})$ we have that $n+1-k$ of the $c_i$ should have value $\lfloor x_i \rfloor +1$ and $k-1$ of them should have value $\lfloor x_i \rfloor$, so that $\sum_{i=1}^{n} c_i \equiv 0 \: (\mathrm{mod}~n+1)$ is satisfied. Moreover, condition (iii) is satisfied exactly when $c_i = \lfloor x_i \rfloor$, for $i=1,\ldots,k-1$, and $c_i = \lfloor x_i \rfloor + 1$, for $i=k,\ldots,n$, and, in addition, $\{x_{k-1}\} < \{x_k\}$ because otherwise $d_{\mathrm{tr}}({\bf c}, \varx) = \{x_{k-1}\}+1-\{x_k\} = 1$ and
		${\varx}$ is a boundary point.
	\end{enumerate}
	In all three cases, any other  number of indices $i$ for which $c_i$ are assigned the values $\lfloor x_i \rfloor$ and number of assignments $c_j=\lfloor x_j \rfloor +1$ results in $\sum_{i=1}^{n} c_i \not\equiv 0  \: (\mathrm{mod}~n+1)$. Therefore ${\varx}$ is an inner point of exactly one tropical unit ball.
\end{proof}

\bibliographystyle{plain}
\bibliography{Tropical_balls}
\end{document}